\documentclass[10pt,a4paper,onecolumn,oneside]{article}
\usepackage[utf8]{inputenc}
\usepackage[english]{babel}

\usepackage[top=2cm,bottom=2cm,left=3cm,right=3cm]{geometry}
\usepackage{setspace}
\setstretch{1.133}
\usepackage{enumitem}

\usepackage{amsmath,amsfonts,amssymb,amsthm,mathrsfs}
\allowdisplaybreaks[1]

\usepackage{hyperref}
\hypersetup{colorlinks=true,linkcolor=red!70!green!70!black,citecolor=red!70!green!70!black,urlcolor=magenta}
\usepackage{graphicx,color,xcolor,array}

\usepackage{tikz,pgfplots}
\pgfplotsset{compat=1.7}




\newtheorem{theorem}{Theorem}[section]
\newtheorem{lemma}[theorem]{Lemma}
\newtheorem{proposition}[theorem]{Proposition}
\newtheorem{corollary}[theorem]{Corollary}
\numberwithin{equation}{section}


\def\CC{\mathscr{C}}

\def\NN{\mathbb{N}}
\def\RR{\mathbb{R}}

\def\ZZ{\mathbb{Z}}

\newcommand{\nm}{\overline{N}}
\def\H{(H^1(\Omega))^3}
\newcommand{\pmm}{\overline{p}}
\newcommand{\pv}{p^{\circ}} 

\newcommand{\ulo}{U} 
\newcommand{\um}{\overline{u}} 
\newcommand{\uv}{u^{\circ}} 
\newcommand{\ver}{^{\circ}} 
\newcommand{\vm}{\overline{v}} 
\newcommand{\vv}{v^{\circ}} 

\newcommand{\wm}{\overline{\omega}} 

\newcommand{\wlo}{W} 
\newcommand{\wv}{\omega^{\circ}} 
\newcommand{\wwm}{\overline{w}}

\DeclareMathOperator{\BS}{BS} 
\DeclareMathOperator{\di}{div}
\DeclareMathOperator{\rot}{rot}

\begin{document}

\author{Quentin \textsc{Vila}\footnote{
Univ. Grenoble Alpes, CNRS, Institut Fourier, 38000 Grenoble, France
}}
\title{ \LARGE \bf
Stability of Three-dimensional Oseen Vortices under Helical Perturbations
}
\date{{\normalsize 2024, January, the 16$^{th}$}}

\maketitle

\begin{center}
{\bf Abstract.}
\end{center}

We study the long-time behaviour of helically symmetric infinite-energy solutions to the incompressible Navier-Stokes equations in the whole space $\RR^3$. Our solutions are $H^1$-perturbations of a Lamb-Oseen vortex whose circulation Reynolds number can be any fixed real number. If $v$ denotes the helical velocity perturbation, no matter how large at initial time in $H^1(\RR^3)$, we show that the scale-invariant quantities $\|v(t)\|_{L^2}$ and $\sqrt{t} \|\nabla v(t)\|_{L^2}$ converge to zero as $t \to +\infty$.
This proves that the Oseen vortex is globally stable with respect to $H^1$-helical perturbations. Our analysis relies on a logarithmic energy estimate for the perturbation $v$, on the Ladyzhenskaya inequality for helical vector fields, and on Poincar\'e's inequality which implies an exponential decay in time for the velocity components whose mean value is zero along the symmetry axis.

\paragraph{Keywords:} fluids mechanics, incompressible Navier-Stokes equations, long-time behaviour, helical flow, Lamb-Oseen's vortex.

\section{Introduction}

We consider the evolution of helical infinite-energy solutions to the incompressible Navier-Stokes equations in $\RR^3$
\begin{equation}\label{eqNSdimensionless}
\left\{\begin{array}{l}
\partial_tu + (u\cdot\nabla)u = \Delta u - \nabla p\\
\rule[1.3em]{0pt}{0pt}
\di u = 0
\end{array}\right.
\end{equation}
where the velocity $u:\RR\times\RR^3\to\RR^3$ and the pressure $p:\RR\times\RR^3\to\RR$ are considered dimensionless, and the kinematic viscosity has been set to one without loss of generality.

Let $L>0$ denote a fixed positive real number. 
We will say that a scalar map $f$ is \emph{helical} if $f$ is constant along vertical helices of pitch $2\pi L$. 
In the three-dimensional cylindrical coordinate system $(r,\theta,z)$, $f$ is helical if and only if $f(r,\theta,z) = f(r,\theta-\frac{z}{L},0)$ for every $r\in\RR_+$, $\theta\in\RR/2\pi\ZZ$, $z\in\RR$. If $f$ is differentiable, this is equivalent to the relation $(\partial_\theta + L\partial_z)f = 0$. For a vector field $u$, being helical means that its cylindrical components
are all helical:
\begin{equation}\label{eqHelical}
u(t,r,\theta,z) = u_r(t,r,\varphi)e_r + u_\theta(t,r,\varphi)e_\theta + u_z(t,r,\varphi)e_z
\end{equation}
where
\[
\varphi = \theta - \tfrac{z}{L} \in\RR/2\pi\ZZ
\quad\text{ and }\quad
e_r = \begin{pmatrix}\cos\theta\\\sin\theta\\0\end{pmatrix},\ 
e_\theta = \begin{pmatrix}-\sin\theta\\\cos\theta\\0\end{pmatrix},\ 
e_z = \begin{pmatrix}0\\0\\1\end{pmatrix}.
\]
In particular, a helical vector field is $2\pi L$-periodic along the vertical axis. We will hence study the Navier-Stokes equations \eqref{eqNSdimensionless} in the vertically-periodic domain 
\[
\Omega = \{(r,\theta,z),\ r\in\RR_+,\, \theta\in\RR/2\pi\ZZ,\, z\in\RR/2\pi L\ZZ\}
\]
which we will think of as $\RR^2\times\RR/2\pi L\ZZ$ in cylindrical coordinates.

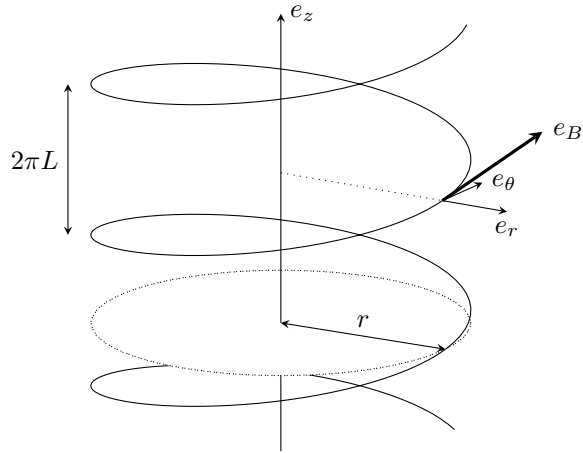
\begin{figure}[hbt]
\centering
\begin{tikzpicture}

\draw [variable=\t,domain=-1.7:0,samples=70] plot({2.5*cos((\t-1/6)*180)},{0.7*sin((\t-1/6)*180)+\t}) ;

\draw [variable=\t,domain=0:2,samples=70] plot({2.5*cos(\t*360)},{0.7*sin(\t*360)}) [dotted,fill=white] ;
\draw [>=stealth,<->] (0,0) -- node[midway][above]{$r$} (2.5*1.73205/2,-0.7*1/2) ; 
\draw (0,-1.7) -- (0,-0.7) ;
\draw [>=stealth,->] (0,0) -- (0,4.1) node[right]{$e_z$} ;

\draw [variable=\t,domain=0:4.1,samples=140] plot({2.5*cos((\t-1/6)*180)},{0.7*sin((\t-1/6)*180) + \t}) ;
\draw [>=stealth,<->] ({-2.5-0.3,1+1/6}) -- ++(0,2) node[midway][left]{$2\pi L$} ;

\draw [dotted] (0,1.99) -- ++({2.5*cos((1.99-1/6)*180)},{0.7*sin((1.99-1/6)*180)}) ;
\draw [>=stealth,->] ({2.5*cos((1.99-1/6)*180)},{0.7*sin((1.99-1/6)*180) + 1.99}) -- ++({1*cos((1.99-1/6)*180)},{0.7/2.5*sin((1.99-1/6)*180)}) node[below]{$e_r$}  ;
\draw [>=stealth,->] ({2.5*cos((1.99-1/6)*180)},{0.7*sin((1.99-1/6)*180) + 1.99}) -- ++({-1*sin((1.99-1/6)*180)},{0.7/2.5*cos((1.99-1/6)*180)}) node[right]{$e_\theta$}  ;
\draw [>=stealth,->,very thick] ({2.5*cos((1.99-1/6)*180)},{0.7*sin((1.99-1/6)*180) + 1.99}) -- ++({-2.5*sin((1.99-1/6)*180)},{0.7*cos((1.99-1/6)*180) + 1/pi}) node[right]{$e_B$}  ;

\end{tikzpicture}
\caption{
The tangent vector to a helical curve of radius $r$ and pitch $2\pi L$ is the Beltrami vector $e_B = re_\theta + Le_z$, which satisfies $e_B\cdot\nabla = \partial_\theta + L\partial_z$ in cylindrical coordinates. A differential scalar map $f$ is helical if $e_B\cdot\nabla f = 0$.
}
\end{figure}

The helical symmetry in the study of three-dimensional flows recently raised a growing interest, following the 1990's work of Titi, Mahalov and Leibovich \cite{MahaTitiLeib} where the authors  show the global well-posedness of the helical Navier-Stokes equations in $H^1$ in an infinite vertical cylinder with homogeneous Dirichlet boundary conditions.
In 1999, Dutrifoy shows in \cite{DutriEulerHeli} that the helical Euler equations are also globally well-defined for regular initial data in any horizontally-bounded helical domain, provided that the flow $u$ has no helical swirl, that is to say, satisfies $u \cdot (re_\theta + Le_z) = ru_\theta + Lu_z = 0$.
Ten years later, the same result is proved by Ettinger and Titi for some class of weak solutions, see \cite{EttingerTiti}.

In the last decade, one can note the work of Lopes Filho, Nussenzveig Lopes and their collaborators on both the Euler and the Navier-Stokes equations, see for example  \cite{LoMaNiuLoTiti,JiuLoNiuLo}.
In the first article \cite{LoMaNiuLoTiti}, the authors consider a fluid either viscous or inviscid evolving in an infinite vertical circular pipe and observe that the fluid behaves as a planar flow when the helical pitch tends to infinity. 
In the second article \cite{JiuLoNiuLo}, the authors show that the helical Navier-Stokes equations are well-posed in the whole three-dimensional space for any helical initial data in $H^1$, and remark under some hypothesis on the helical swirl (relative to the value of the viscosity) that, when the viscosity tends to zero, the viscous helical flow converges towards the inviscid helical flow with zero helical swirl.
The fact that the helical swirl of the flow does not need to be zero when considering the Navier-Stokes equations can be proven thanks to the helical Ladyzhenskaya inequality, see \cite{MahaTitiLeib} and \cite{Ladyzhen} (or Lemma \ref{lemmaLadyzhen} below), which is inherited from the two-dimensional analysis. This inequality ensures the global well-posedness of the helical Navier-Stokes problem for initial data of any size. 

The goal of the present article is to study the asymptotic behaviour of helical solutions to \eqref{eqNSdimensionless} when the time tends to infinity.
It is already known that viscous fluids with finite energy return at large times to the hydrostatic equilibrium, see for example \cite{Masuda84}.
We aim here to establish that this result remains true for any helical flow of finite energy interacting with one Oseen vortex. 
Despite the Oseen vortex carrying an infinite amount of kinetic energy, the way it interacts with some helical perturbation in $H^1$ does not prevent it from remaining asymptotically stable nor the $H^1$ perturbation from returning to equilibrium.

The \emph{Lamb-Oseen vortex} is a self-similar flow with Gaussian profile which arises naturally when studying the dynamics of the two-dimensional Navier-Stokes equations, as it can be seen for example in \cite{MajBert,GalWayR2,GalWay05,GalMae12}.
In  three dimensions, when studying the Navier-Stokes-Coriolis system, the Lamb-Oseen vortex appears as the limit of some vertically-periodic flows of infinite energy rotating at high speed, as explained in \cite{GallayRoussierMichon}.
In the three-dimensional case, the Lamb-Oseen vortex $\wlo$ and its associated velocity field $\ulo$ can be defined as 
\begin{align}
\label{equlo}
&\ulo(t,r,\theta,z) = \frac{1}{2\pi r}\left(1 - e^{-\frac{r^2}{4(1+t)}}\right)e_\theta, 
\\\rule[2.2em]{0pt}{0pt}
\label{eqwlo}
&\wlo(t,r,\theta,z) = \frac{1}{4\pi(1+t)}e^{-\frac{r^2}{4(1+t)}}\ e_z.
\end{align}
These three-dimensional vector fields are \emph{radial}, \emph{ie.} their cylindrical components only depend on the radial variable $r$. 
In particular they are invariant by translation along the vertical axis, which is what Majda and Bertozzi call \emph{two-and-a-half dimensional flows} \cite{MajBert}, and they are helical. 
Moreover, the velocity field $\ulo$ associated with the Lamb-Oseen vortex is a self-similar solution to the Navier-Stokes equations \eqref{eqNSdimensionless}, and it has infinite kinetic energy in the sense that its $L^2$-norm is infinite.

Let us fix a real number $a\in\RR$ and introduce the functional spaces 
\begin{equation}
H = \{v\in (H^1(\Omega))^3 \mid \di(v)=0,\, v\text{ is helical} \}
\end{equation}
and
\begin{equation}\label{defX}
X = H + a\ulo(0) 
= \{v+a\ulo(0),\ v\in H\},
\end{equation}
where
\[
\|v\|_{H^1}^2 = \|v\|_{L^2}^2 + \|\nabla v\|_{L^2}^2
\ ,\qquad
\|v\|_{L^2} = \left(\int_\RR\int_0^{2\pi}\!\!\int_0^{2\pi L} |v(r,\theta,z)|^2\,rdrd\theta dz\right)^{\frac{1}{2}},
\]
and where $\ulo(0,r,\theta,z) = \frac{1}{2\pi r}(1 - e^{-\frac{r^2}{4}})e_\theta$.
The real number $a$ will be refered to as the \emph{circulation Reynolds number} of the Oseen vortex.
The goal of this article is to show that for any initial data $u_0$ in $X$, the Navier-Stokes flow is globally well-defined in $X$ and tends towards the Oseen vortex as time goes to infinity. More precisely, we prove the following theorem. 

\begin{theorem}\label{ThIntro}
Let $a\in\RR$. Let $u_0\in X$. Equation \eqref{eqNSdimensionless} with initial data $u_0$ has a unique global solution $u(t) = v(t) + a\ulo(t)$ where $v\in\CC^0([0,+\infty[\,,H)$. 
Moreover, the following limit holds
\begin{equation}\label{eqThIntro}
\|u(t) - a\ulo(t)\|_{L^2} + \sqrt{t}\,\|\nabla u(t) - a\nabla\ulo(t)\|_{L^2} \xrightarrow[t\to+\infty]{} 0.
\end{equation}
\end{theorem}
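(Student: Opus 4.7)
My plan is to work with the perturbation $v = u - a\ulo$, which satisfies the system $\partial_t v + (v\cdot\nabla)v + a(\ulo\cdot\nabla)v + a(v\cdot\nabla)\ulo = \Delta v - \nabla q$, $\di v = 0$, with $v$ helical. The pressure $q$ enforces the divergence-free constraint, and a central difficulty is that $a\in\RR$ is arbitrary, so no smallness of the coupling to $\ulo$ is available for free.

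For global well-posedness, I would first construct a local-in-time solution in $H$ by a Galerkin scheme and then obtain a global a priori $H^1$ bound on $v$. The nonlinear term $(v\cdot\nabla)v$ is controlled by the helical Ladyzhenskaya inequality (Lemma \ref{lemmaLadyzhen}), which provides the two-dimensional interpolation $\|v\|_{L^4}^2\lesssim\|v\|_{L^2}\|\nabla v\|_{L^2}$. The linear coupling terms involving $a\ulo$ are controlled by the smoothness and $L^\infty$ bounds on $\ulo$ and $\nabla\ulo$. Together, these prevent finite-time blow-up, so the solution extends to $\CC^0([0,+\infty[\,,H)$.

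For the long-time behavior, I would decompose $v=\vm+\vv$, where $\vm(t,r)=\frac{1}{2\pi L}\int_0^{2\pi L}v\,dz$ is the vertical average (which is radial for a helical field) and $\vv=v-\vm$ is the zero-mean remainder along $z$. Since $\vv$ is periodic of zero mean in $z$, Poincar\'e's inequality yields $\|\vv\|_{L^2}\leq L\|\partial_z\vv\|_{L^2}$. Inserting this into the energy balance for $\vv$, and exploiting that $\|\nabla\ulo(t)\|_\infty = O(1/t)$ to absorb the coupling term $a\langle(\vv\cdot\nabla)\ulo,\vv\rangle$ into the viscous dissipation after a finite waiting time, I would obtain exponential decay of $\|\vv(t)\|_{L^2}$ and, via a parabolic regularization step, of $\sqrt{t}\,\|\nabla\vv(t)\|_{L^2}$. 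For the average $\vm$, the problem reduces to a radial, essentially two-dimensional, perturbation of the Oseen vortex. Here I would derive a logarithmic energy estimate of the form $\frac{d}{dt}\|\vm\|_{L^2}^2 + \tfrac{1}{2}\|\nabla\vm\|_{L^2}^2 \leq C\|\vm\|_{L^2}^2 \log\bigl(e+\|\nabla\vm\|_{L^2}^2/\|\vm\|_{L^2}^2\bigr)$ by exploiting the explicit Gaussian profile of $\ulo$ together with a Brezis-Gallou\"et-type interpolation. Combined with the uniform $H^1$ bound and a return-to-equilibrium argument in the spirit of Masuda and Gallay-Wayne for planar Navier-Stokes, this forces $\|\vm(t)\|_{L^2}\to 0$.

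The main obstacle is the logarithmic estimate for an arbitrary circulation Reynolds number $a$. Because $|a|$ can be large, the dangerous linear term $a\langle(v\cdot\nabla)\ulo,v\rangle$ cannot be absorbed into the viscous dissipation by a Cauchy-Schwarz bound, and a direct Gr\"onwall closure would produce exponential growth rather than the decay we need. The logarithmic interpolation is what keeps the estimate borderline and compatible with the available uniform-in-time $H^1$ control. Once $\|v(t)\|_{L^2}\to 0$ is established, the $\sqrt{t}\,\|\nabla v(t)\|_{L^2}\to 0$ part of \eqref{eqThIntro} follows by a standard parabolic-regularization argument, multiplying the gradient identity by $t$ and integrating the (now integrable) right-hand side.
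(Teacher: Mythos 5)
There are two genuine gaps in your plan, both at the points where the arbitrariness of $a$ actually bites. First, your exponential-decay argument for the zero-mean part $\vv$ only absorbs the \emph{linear} coupling $a\langle(\vv\cdot\nabla)\ulo,\vv\rangle$ via $\|\nabla\ulo(t)\|_{L^\infty}=\mathcal O(1/t)$, but it ignores the nonlinear interaction with the vertical mean, $\int_\Omega \uv\cdot(\uv\cdot\nabla)\um$. By Ladyzhenskaya this term produces the Gr\"onwall coefficient $\frac{C_0}{L}\|\nabla\um(t)\|_{L^2}^2$, so the decay rate coming from Poincar\'e is $\exp\bigl(-\frac{t}{L^2}+\frac{2C_0}{L}\int_0^t\|\nabla\um(s)\|_{L^2}^2\,ds\bigr)$, and survival of the exponential requires $\int_0^t\|\nabla\um\|_{L^2}^2\,ds$ to grow sublinearly. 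The classical energy estimate \eqref{eqEstimatev} only gives $\int_0^t\|\nabla v\|_{L^2}^2\,ds\lesssim(1+t)^{2C|a|}$, which destroys the exponential as soon as $2C|a|\geqslant 1$; this is precisely why smallness of $a$ cannot be dispensed with for free. The paper closes this gap with the logarithmic-in-time energy estimate of Proposition \ref{propMaekawaIneq}: comparing $u$ with the \emph{time-shifted} vortices $\ulo(\cdot+\tau)$ and using \eqref{eqMaekawaIneq1}--\eqref{eqMaekawaIneq2} yields $\|v(t)\|_{L^2}^2+2\int_0^t\|\nabla v\|_{L^2}^2\,ds\leqslant C(1+\ln(1+t))$ for arbitrary $a$, whence the Gr\"onwall factor is only $(1+t)^C$ and the exponential decay of $\uv$ (Proposition \ref{lemmaEstimatesuv}) goes through. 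Your sketch contains no substitute for this shifted-vortex device.

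Second, your treatment of the mean $\vm$ fights an enemy that is not there, with a weapon that is not justified. After vertical averaging, helicity and incompressibility force $\um=\vm+a\ulo$ to be radial with $\um_r=0$, so the self-advection $\um\cdot\nabla\um=-\frac{1}{r}\um_\theta^2\,e_r$ is a pure gradient and is absorbed into the pressure; in particular the ``dangerous'' large-$a$ coupling $a\langle(\vm\cdot\nabla)\ulo,\vm\rangle$ vanishes identically, and $\vm$ solves the plain heat equation \eqref{eqvm} with source $\nm=PQ(\uv\cdot\nabla\uv)$, which is exponentially small by step one. The limit \eqref{eqThIntro}, including the $\sqrt{t}$-weighted gradient, then follows from the Duhamel formula and the property $t^{|\alpha|/2}\|\partial_\alpha S(t)f\|_{L^2}\to0$ of Lemma \ref{lemmaSemigp}, with no return-to-equilibrium machinery. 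Your proposed Brezis--Gallou\"et-type inequality $\frac{d}{dt}\|\vm\|_{L^2}^2+\frac12\|\nabla\vm\|_{L^2}^2\leqslant C\|\vm\|_{L^2}^2\log\bigl(e+\|\nabla\vm\|_{L^2}^2/\|\vm\|_{L^2}^2\bigr)$ is not derived, and even granted it would give at best slowly growing bounds via Gr\"onwall, not $\|\vm(t)\|_{L^2}\to0$; moreover the ``uniform-in-time $H^1$ control'' you invoke is not available at that stage --- the a priori bound is only polynomial, $\|\nabla v(t)\|_{L^2}^2\leqslant C(1+t)^C$ (Corollary \ref{corMaekawaIneq}), so the Masuda/Gallay--Wayne closure lacks its hypotheses. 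The well-posedness part of your proposal (Galerkin or Fujita--Kato plus helical Ladyzhenskaya) is fine and matches the paper, but the asymptotic part needs the shifted-vortex logarithmic estimate and the gradient structure of the averaged equation to work for every $a\in\RR$.
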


\paragraph{Remarks.}
\begin{enumerate}
[leftmargin=0pt,itemindent=!]
\item Given the properties of $\ulo$ listed in Lemma \ref{lemmaUloIneq} below, we have naturally that $X = H + a\ulo(0) = H + a\ulo(t)$ for any $t\geqslant0$. Theorem \ref{ThIntro} asserts consequently the existence of a solution remaining in $X$ for all times. We chose to define the functional space $X$ in \eqref{defX} from $\ulo(0)$ to highlight the fact that $X$ does not depend on time, but $\|u(t) - a\ulo(0)\|_{L^2}$ and $\sqrt{t}\,\|\nabla u(t) - a\nabla\ulo(0)\|_{L^2}$ do not tend to zero in general. This the reason why we shall always consider the difference $v(t) = u(t) - a\ulo(t)$ in what follows, as in Theorem \ref{ThIntro}, rather than $u(t) - a\ulo(0)$.
\item When $a=0$, Theorem \ref{ThIntro} treats the case where $u_0$ has finite energy and states that the helical Navier-Stokes equations are globally well-posed in $H^1$ (as shown in \cite{JiuLoNiuLo}) and that the kinetic energy of the fluid tends to zero, without any smallness assumption on the initial data (as shown in \cite{Masuda84} for all weak solutions).
\item When $a\neq0$, Theorem \ref{ThIntro} says that the Oseen vortex $a\ulo$ remains asymptotically stable under any helical perturbation in $H^1(\Omega)$, in the following sense. The Oseen vortex $a\ulo$ is the solution of \eqref{eqNSdimensionless} with initial data $a\ulo(0)$, and Theorem \ref{ThIntro} says that for any $v_0\in H$, the solution $u$ of \eqref{eqNSdimensionless} with initial data $v_0+a\ulo(0)$ is equivalent to $a\ulo$ at large times: $u(t)\sim a\ulo(t)$ when $t\to+\infty$.
\item The circulation Reynolds number $a$ of the Oseen vortex is a significant parameter of the problem, see for example \cite{GalWay05,GalMae12}, which is invariant under rescaling.
As discussed in \cite{GalWay05}, many situations in hydrodymanics see instabilities appearing at high Reynolds numbers, and in \cite{GalMae12} for example, the stability of the two-dimensional Oseen vortex is shown in exterior domains only when the circulation Reynolds number is small enough. In contrast, Theorem \ref{ThIntro} holds without any smallness assumption on $a\in\RR$.
\item Theorem \ref{ThIntro} applied to two-and-a-half dimensional flows implies as a corollary that the limit \eqref{eqThIntro} holds for any two-dimensional radial perturbation $v_0$ in $H^1$, while it is not known if this is true in the general two-dimensional case when the flow is not supposed radial.
\item The proof of Theorem \ref{ThIntro} actually gives more detailed information on the long-time behaviour of the solutions of \eqref{eqNSdimensionless} in $X$. 
If we decompose some helical three-dimensional viscous flow $\mbox{$u:t\mapsto u(t)\in X$}$ into a zero-vertical-mean part $\uv$ plus its orthogonal component $\um$ as below, we remark that the time-asymptotic expansion of $u$ is the same as that of a radial two-and-a-half-dimensional viscous flow.
Indeed, let us define the orthogonal projection operator $Q$ as
\begin{equation}\label{eqdefQ}
\forall v\in (H^1(\Omega))^3,\qquad Qv = \frac{1}{2\pi L}\int_0^{2\pi L} v(\cdot,\cdot,z)\,dz\,;
\end{equation}
meaning that $1-Q$ is the orthogonal projection operator on the vector fields of $(H^1(\Omega))^3$ whose mean value relative to the vertical coordinate is zero.
Writing for conciseness 
\begin{equation}\label{eqdefumuv}
\um = Qu \qquad\text{ and }\qquad \uv = (1-Q)u,
\end{equation}
we thus have $u = \um + \uv$ where $\um = \vm + a\ulo \in X$ and $\uv = \vv\in H$. 
Because $u$ is helical, $\um$ and $\vm$ are radial two-and-a-half dimensional vector fields. 
On the other hand, because $\uv$ has a zero vertical mean value, we will show in Proposition \ref{lemmaEstimatesuv} below that $t\mapsto\|\uv(t)\|_{H^1}$ decays exponentially.
As a consequence, the long-time behaviour of $u$ is entirely governed by the equation of $\vm$, which is the heat equation 
\[
\partial_t\vm + \nm = \Delta\vm
\]
where the source term $\nm$ decays exponentially when time tends to infinity. Knowing this, one could determine further terms in the long-time asymptotic expansion of $u$ thanks to what is already known about the heat equation.
This point will be developped in Section \ref{SectionAsymptotics} and discussed in Section \ref{SectionConclusion}. 
\end{enumerate}

The rest of the paper is organised as follows. 
In Section \ref{SectionExistence} we prove the global well-posedness of the Navier-Stokes equations \eqref{eqNSdimensionless} in $X$, using the Ladyzhenskaya inequality for helical vector fields.
In Section \ref{SectionEstimates}, we decompose the velocity field $u = \um + \uv$ into its vertical mean value $\um$ and its zero-vertical-mean part $\uv$ and we give different types of estimates on these components. In particular, we see that the $L^2$-norms of the zero-vertical-mean part $\uv$ and its derivatives decay exponentially in time. The key points of this analysis are the Poincaré inequality in $\Omega$ and an improved energy estimate due to Gallay and Maekawa \cite{GalMae12}.
In Section \ref{SectionAsymptotics} we prove the convergence result \eqref{eqThIntro} in Theorem \ref{ThIntro}.
Finally, in Section \ref{SectionConclusion} we suggest to study the long-time behaviour of $u$ from the point of view of the vorticity $\omega = \rot(u)$ in some weighted $L^2$ spaces in order to obtain decay rates or further asymptotic terms in the long-time asymptotic expansion of $u$.

\section{Global Well-Posedness of the Helical Navier-Stokes Equations}
\label{SectionExistence}

This section follows the analysis led by Mahalov, Titi and Leibovich in \cite{MahaTitiLeib}. 
In particular, it will only slightly differ from the proofs that can be found there, or later in \cite{JiuLoNiuLo} for the whole space. 

To begin with, we need to state some properties of the velocity field associated with the Oseen vortex. All of them stem from direct calculations, see for example \cite{GalMae12}.

\begin{lemma}[$L^2$ and $L^\infty$ estimates for the Oseen vortex]\label{lemmaUloIneq}
There exists a constant $C>0$ such that for every $t \geqslant 0$
\[
\|\ulo(t)\|_{L^\infty} \leqslant \frac{C}{\sqrt{1+t}}
\quad\text{ and }\quad
\|\nabla\ulo(t)\|_{L^\infty} \leqslant \frac{C}{1+t},
\]
and for every $t_2 \geqslant t_1 \geqslant 0$
\begin{align}\label{eqMaekawaIneq1}
\|\ulo(t_2) - \ulo(t_1)\|_{L^2}^2 
&\leqslant CL\,\ln\!\left(\frac{1+t_2}{1+t_1}\right)
\\\label{eqMaekawaIneq2}
\text{and }\qquad
\|\nabla\ulo(t_2) - \nabla\ulo(t_1)\|_{L^2}^2 
&\leqslant CL \left(\frac{1}{1+t_1} - \frac{1}{1+t_2}\right).
\end{align}
\end{lemma}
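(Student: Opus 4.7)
The proof exploits the self-similar structure of the Oseen vortex. Setting $\rho = r/\sqrt{1+t}$, one can rewrite
\[
\ulo(t,r,\theta,z) = \frac{1}{\sqrt{1+t}}\,G(\rho)\,e_\theta, \qquad G(\rho) = \frac{1}{2\pi\rho}\bigl(1 - e^{-\rho^2/4}\bigr),
\]
so that $G$ and $\partial_\rho G$ are bounded on $\RR_+$ (both are smooth at the origin and decay like $1/\rho$ and $1/\rho^2$ respectively at infinity). The spatial chain rule contributes an extra factor $(1+t)^{-1/2}$ to every derivative, which immediately yields the two $L^\infty$ estimates.

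The subtlety in (\ref{eqMaekawaIneq1}) is that $\ulo(t)$ is not itself square-integrable on $\Omega$, since its $\frac{1}{2\pi r}e_\theta$ far-field contribution produces a logarithmic divergence at infinity. That leading term cancels in the difference, however, giving
\[
\ulo(t_2) - \ulo(t_1) = \frac{1}{2\pi r}\bigl(e^{-r^2/4(1+t_1)} - e^{-r^2/4(1+t_2)}\bigr)\,e_\theta.
\]
I would square and integrate over $\Omega$, then use the change of variable $s = r^2/4$ to reduce the squared $L^2$-norm to a Frullani-type integral, which evaluates explicitly to $\frac{L}{4\pi}\ln\bigl((2+t_1+t_2)^2/(4(1+t_1)(1+t_2))\bigr)$. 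Setting $x = (1+t_2)/(1+t_1)\geq 1$, the elementary inequality $(1+x)^2\leq 4x^2$ then yields the announced bound.

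For (\ref{eqMaekawaIneq2}), the cleanest route is to pass to the vorticity. Since $\ulo$ is divergence-free on $\Omega$, periodic in $z$, and its gradient decays like $1/r^2$ at infinity, integration by parts yields the identity $\|\nabla\ulo(t_2) - \nabla\ulo(t_1)\|_{L^2} = \|\wlo(t_2) - \wlo(t_1)\|_{L^2}$. The vorticity $\wlo$ is essentially a two-dimensional Gaussian heat kernel, so the remaining computation reduces to an explicit Gaussian integral. With $\sigma_i = 1/(4(1+t_i))$, one finds
\[
\|\wlo(t_2) - \wlo(t_1)\|_{L^2}^2 = \frac{L}{2\pi}\cdot\frac{(\sigma_1-\sigma_2)^2}{\sigma_1+\sigma_2},
\]
and the elementary observation that $(a-b)^2/(a+b)\leq a-b$ for $a\geq b\geq 0$ converts this into (\ref{eqMaekawaIneq2}).

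The analysis is essentially a sequence of routine calculations, so I do not expect any significant obstacle. The two points that deserve care are the integrability at infinity (circumvented by working with the difference $\ulo(t_2) - \ulo(t_1)$ rather than each term separately) and the vanishing of the boundary contributions in the integration-by-parts step, which is secured by the $1/r^2$ decay of $\nabla\ulo$.
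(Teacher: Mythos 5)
Your proof is correct and follows exactly the direct-calculation route the paper invokes (the paper offers no proof of this lemma beyond citing \cite{GalMae12}): self-similar rescaling for the $L^\infty$ bounds, cancellation of the $\frac{1}{2\pi r}$ tails plus a Frullani integral for \eqref{eqMaekawaIneq1}, and the identity $\|\nabla w\|_{L^2} = \|\rot w\|_{L^2}$ for divergence-free decaying fields reducing \eqref{eqMaekawaIneq2} to an explicit Gaussian integral. The only blemishes are immaterial constant slips --- the exact prefactors are $\frac{L}{2}$ and $L$ rather than your $\frac{L}{4\pi}$ and $\frac{L}{2\pi}$ (you seem to have dropped the $2\pi \times 2\pi L$ angular and vertical measure factors) --- which do not affect the stated bounds since $C$ is unspecified.
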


As a consequence of these $L^2$ estimates, for every $t\geqslant 0$ one has $\ulo(t) - \ulo(0) \in H$ and $a\ulo(t)\in X$ where $X$ and $H$ are defined in \eqref{defX}. 
For this reason, if $u:t\mapsto u(t)\in X$ is a vector field in $X$ depending on time, then for any $t\geqslant0$ we have the decomposition $u(t) = v(t) + a\ulo(t)$ with $v(t)\in H$, by writing $v(t) = (u(t) - a\ulo(0)) + a(\ulo(0) - \ulo(t)) \in H$.
Given that $\ulo$ is a solution of the heat equation $\partial_t\ulo = \Delta\ulo$, we have that $u$ satisfies \eqref{eqNSdimensionless} in $X$ if and only if $v$ satisfies in $H$ the equation
\begin{equation}\label{eqv}
\left\{\begin{array}{l}
\partial_tv + v\cdot\nabla v + a(\ulo\cdot\nabla v + v\cdot\nabla \ulo) = \Delta v - \nabla p - a^2\ulo\cdot\nabla\ulo\\
\rule[1.3em]{0pt}{0pt}
\di v = 0
\end{array}\right.
\end{equation}
where
\[
\ulo\cdot\nabla\ulo = -\nabla p^{\ulo} \quad\text{ for }\quad p^{\ulo}(t,r) = \int_0^r \frac{1}{\rho}\ulo_\theta(t,\rho)^2\,d\rho.
\]

Because on the one hand $\ulo(0)$ is in $(L^p(\Omega))^3$ for $p>2$ and on the other hand the Sobolev embeddings give $(H^1(\Omega))^3 \subset (L^p(\Omega))^3$ for $2\leqslant p\leqslant 6$, the classical results on the Navier-Stokes equations already tell us that from any initial data $u_0\in X$ there exists a unique local mild solution of \eqref{eqNSdimensionless} for example in $(L^3(\Omega))^3$ (see \cite{Kato84}), which is the Lebesgue critical space. 
We need however to verify that this solution remains in $X$. 
The fact that $u(t)$ remains helical for all times is due to the symmetries of the Navier-Stokes equations, see for example \cite{JiuLoNiuLo}, because equations \eqref{eqNSdimensionless} describe the evolution of a fluid in a homogeneous and isotropic physical medium. To prove that $v(t) = u(t) - a\ulo(t)$ remains in $(H^1(\Omega))^3$, we will refer to the integral version of equation \eqref{eqv} and follow the usual two-steps argument: using the Banach fixed-point theorem to show that $v$ remains in $H^1$ at least locally around $t=0$, then exhibiting a bound on $\|v(t)\|_{H^1}$ that does not blow up in finite time.

Denoting by $P$ the Leray projection operator on the divergence-free vector fields, the integral equation associated to \eqref{eqv} is 
\begin{equation}\label{eqvIntegral}
v(t) = S(t)v_0 - \int_0^tS(t-s)P\left[\,v(s)\cdot\nabla v(s) + a\ulo(s)\cdot\nabla v(s) + av(s)\cdot\nabla\ulo(s)\,\right]\,ds
\end{equation}
where $(S(t))_{t\geqslant0}$ denotes the heat semigroup in $\Omega$.
We thus have the following local existence result.

\begin{proposition}[Local existence]\label{ThLocalExistence}
Let $v_0\in \H$. There is a time $T>0$ such that there exists a unique solution $v\in \CC^0([0,T],\H) \cap L^2([0,T],(H^2(\Omega))^3)$ to equation \eqref{eqvIntegral}. 
Moreover, if the maximal time $T'$ of existence of such a solution is finite, then $\|v(t)\|_{H^1} \longrightarrow +\infty$ as $t\to T'$.
\end{proposition}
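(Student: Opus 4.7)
The plan is to apply Banach's fixed-point theorem to the map $\Phi$ defined by the right-hand side of equation \eqref{eqvIntegral},
\[
\Phi(v)(t) = S(t) v_0 - \int_0^t S(t-s) P\bigl[v \cdot \nabla v + a\ulo \cdot \nabla v + a v \cdot \nabla \ulo\bigr](s)\,ds,
\]
on the parabolic space $E_T := \CC^0([0,T], \H) \cap L^2([0,T], (H^2(\Omega))^3)$ equipped with its natural norm $\|v\|_{E_T} = \sup_{0 \leqslant t \leqslant T} \|v(t)\|_{H^1} + \bigl(\int_0^T \|v(t)\|_{H^2}^2\,dt\bigr)^{1/2}$, for $T > 0$ to be chosen small enough. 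Standard maximal $L^2$-regularity for the heat semigroup on $\Omega = \RR^2 \times (\RR/2\pi L\ZZ)$ gives $\|S(\cdot) v_0\|_{E_T} \leqslant C \|v_0\|_{H^1}$ and controls the Duhamel integral in the $E_T$-norm by the $L^2([0,T]\times\Omega)$ norm of the source. The Leray projector $P$ is bounded on $L^2$, and $S(t)$ and $P$ both preserve the divergence-free and helical structure, so if $v_0 \in H$ the fixed point lies in $\CC^0([0,T], H)$ automatically.

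The heart of the matter is estimating the three source terms in $L^2([0,T]\times\Omega)$. For the quadratic term, the three-dimensional Gagliardo-Nirenberg inequality $\|v\|_{L^\infty} \leqslant C \|v\|_{H^1}^{1/2} \|v\|_{H^2}^{1/2}$ combined with Cauchy-Schwarz in time yields
\[
\int_0^T \|v \cdot \nabla v\|_{L^2}^2\,dt \leqslant C T^{1/2} \|v\|_{E_T}^4.
\]
For the two linear-in-$v$ terms, Lemma \ref{lemmaUloIneq} furnishes the uniform bounds $\|\ulo(t)\|_{L^\infty} + \|\nabla \ulo(t)\|_{L^\infty} \leqslant C$ on any bounded interval, whence
\[
\int_0^T \|\ulo \cdot \nabla v\|_{L^2}^2 + \|v \cdot \nabla \ulo\|_{L^2}^2\,dt \leqslant C T \|v\|_{E_T}^2.
\]
Plugging these into the Duhamel bound I obtain $\|\Phi(v)\|_{E_T} \leqslant C_0 \|v_0\|_{H^1} + C_1 T^{1/4} \|v\|_{E_T}^2 + C_2 |a| T^{1/2} \|v\|_{E_T}$, together with a matching bilinear Lipschitz estimate on $\Phi(v_1) - \Phi(v_2)$. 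Choosing $T = T(\|v_0\|_{H^1}, |a|)$ small enough makes $\Phi$ a strict contraction on an appropriate closed ball of $E_T$, producing the unique local mild solution.

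For the blow-up criterion, the local existence time produced above depends only on $\|v_0\|_{H^1}$ and $|a|$. Hence, if the maximal existence time $T'$ were finite but $\|v(t)\|_{H^1}$ remained bounded by some $M$ as $t \to T'$, one could restart the fixed-point argument from any $t_0$ close to $T'$ with a uniform existence window of length $T(M, |a|)$, obtaining a solution on an interval strictly exceeding $T'$ and contradicting maximality.

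The main technical obstacle is the critical scaling of $v \cdot \nabla v$ in $H^1(\Omega)$ in three space dimensions: a pure $L^\infty_t H^1_x$ Duhamel estimate would produce a non-integrable singularity $(t-s)^{-1}$ at $s = t$. Working in the stronger space $E_T$, which exploits the $L^2_t H^2_x$ parabolic gain, is essential in order for Gagliardo-Nirenberg to close the bilinear estimate with a positive power of $T$, which is what drives the contraction.
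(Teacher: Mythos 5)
Your proposal is correct and takes essentially the same approach as the paper, whose entire proof is a one-line citation of the classical Fujita--Kato fixed-point method: your contraction argument for the Duhamel map in $\CC^0([0,T],\H)\cap L^2([0,T],(H^2(\Omega))^3)$, with Agmon's inequality and maximal $L^2$-regularity closing the bilinear estimate and the uniform-in-time bounds on $\ulo$ from Lemma \ref{lemmaUloIneq} handling the linear terms, is a standard and valid instantiation of that method, and your restart argument (using that the local time depends only on $\|v_0\|_{H^1}$, $|a|$ and $L$) is the usual proof of the blow-up alternative. The only cosmetic difference is that Fujita--Kato classically works with time-weighted semigroup smoothing norms and fractional powers of the Stokes operator rather than the parabolic energy space, but both are Duhamel-plus-Banach-fixed-point schemes, and since $H^1$ data is subcritical in this setting your energy-space variant is if anything the cleaner choice.
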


\begin{proof}
The proof follows the classical approach of Fujita and Kato \cite{FujitaKato}. 
\end{proof}

We now have to prove that the solution $v$ of equation \eqref{eqvIntegral} is globally well-defined by showing that $\|v(t)\|_{H^1}$ does not blow up in finite time. For this, we will use classical energy estimates. As discussed in \cite{MahaTitiLeib}, the known difficulty of this step for the general three-dimensional Navier-Stokes equations disappears when considering helical flows, for in this case the analysis is similar to that of the two-dimensional Navier-Stokes equations.
Indeed, the inequality of Ladyzhenskaya which is used in asserting the global existence of two-dimensional flows $\|f\|_{L^4} \leqslant C\|f\|_{L^2}^{\frac{1}{2}}\|\nabla f\|_{L^2}^{\frac{1}{2}}$ has different exponents when the flow is three-dimensional $\|f\|_{L^4} \leqslant C\|f\|_{L^2}^{\frac{1}{4}}\|\nabla f\|_{L^2}^{\frac{3}{4}}$ (cf the first pages of her book \cite{Ladyzhen}), which prevents from concluding in the same way; however, when the flow is helical we can use an inequality with the same exponents as in the two dimensional case. 
The proof can be found in \cite{MahaTitiLeib} or \cite{JiuLoNiuLo}.

\begin{lemma}[Ladyzhenskaya's inequality for helical maps]\label{lemmaLadyzhen}
There exists a universal constant $C_0>0$ such that $\|v\|_{L^4} \leqslant (\frac{C_0}{L})^{\frac{1}{4}}\, \|v\|_{L^2}^{\frac{1}{2}} \|\nabla v\|_{L^2}^{\frac{1}{2}}$ for any helical vector field $v\in\H$.
\end{lemma}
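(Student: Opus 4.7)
The plan is to leverage the helical symmetry to reduce the three-dimensional estimate on $\Omega$ to the classical two-dimensional Ladyzhenskaya inequality on $\RR^2$. Since a helical function is constant along vertical helices of pitch $2\pi L$, it is entirely determined by its trace on the cross-section $\{z=0\}$, which identifies naturally with $\RR^2$ in polar coordinates $(r,\varphi)$.

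First, I would set up the reduction. For $v\in\H$ helical, the scalar $|v|^2 = v_r^2 + v_\theta^2 + v_z^2$ is helical, since each cylindrical component depends only on $(r,\varphi)$ with $\varphi = \theta - z/L$; I can thus define $g(r,\varphi) = |v|(r,\varphi,0)$. The change of variable $\varphi = \theta - z/L$ at fixed $z$, combined with $2\pi$-periodicity in $\theta$, gives for any $p\in[1,+\infty)$
\[
\|v\|_{L^p(\Omega)}^p = \int_\Omega |v|^p\,rdrd\theta dz = 2\pi L\int_{\RR^2} g^p\,rdrd\varphi = 2\pi L\,\|g\|_{L^p(\RR^2)}^p.
\]

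Second, I would compare the three- and two-dimensional gradients. For any helical scalar $f$ with reduction $\tilde{f}(r,\varphi)=f(r,\varphi,0)$, the chain rule yields $\partial_\theta f = \partial_\varphi\tilde{f}$ and $\partial_z f = -\tfrac{1}{L}\partial_\varphi\tilde{f}$, so
\[
|\nabla f|^2 = (\partial_r\tilde{f})^2 + \Bigl(\tfrac{1}{r^2}+\tfrac{1}{L^2}\Bigr)(\partial_\varphi\tilde{f})^2 \geqslant |\nabla_{2D}\tilde{f}|^2,
\]
where $\nabla_{2D}$ denotes the planar gradient in polar coordinates. Applied to $f = |v|$, which lies in $H^1(\Omega)$ thanks to Kato's pointwise inequality $\bigl|\nabla|v|\bigr|\leqslant|\nabla v|$, and integrated over $\Omega$, this yields
\[
\|\nabla_{2D} g\|_{L^2(\RR^2)}^2 \leqslant \frac{1}{2\pi L}\bigl\|\nabla|v|\bigr\|_{L^2(\Omega)}^2 \leqslant \frac{1}{2\pi L}\|\nabla v\|_{L^2(\Omega)}^2.
\]

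Finally, I would apply the classical planar Ladyzhenskaya inequality $\|g\|_{L^4(\RR^2)}^4 \leqslant C'_0\,\|g\|_{L^2(\RR^2)}^2\,\|\nabla g\|_{L^2(\RR^2)}^2$ for $g\in H^1(\RR^2)$, and combine it with the two scaling relations above:
\[
\|v\|_{L^4(\Omega)}^4 = 2\pi L\,\|g\|_{L^4(\RR^2)}^4 \leqslant \frac{C'_0}{2\pi L}\,\|v\|_{L^2(\Omega)}^2\,\|\nabla v\|_{L^2(\Omega)}^2,
\]
which yields the announced inequality with $C_0 = C'_0/(2\pi)$. The only subtle point is the passage to the scalar $|v|$, which inherits the helical symmetry and the $H^1$-regularity of $v$ via Kato's inequality; once this is justified, the factor $1/L$ in the conclusion emerges naturally from the $2\pi L$ ratio between three-dimensional integrals on $\Omega$ and two-dimensional integrals on the cross-section $\RR^2$.
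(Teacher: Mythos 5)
Your proof is correct and follows essentially the same route as the one the paper relies on: the paper gives no argument of its own but defers to \cite{MahaTitiLeib} and \cite{JiuLoNiuLo}, where the inequality is likewise obtained by dimensional reduction — a helical field is determined by its trace on a planar cross-section, the planar Ladyzhenskaya inequality is applied there, and the factor $1/L$ emerges from the $2\pi L$ ratio between integrals over $\Omega$ and over the slice, exactly as in your computation. You also correctly identified and handled the one delicate point, namely passing to the helical scalar $|v|$ via Kato's inequality $\bigl|\nabla|v|\bigr|\leqslant|\nabla v|$ so that the reduction produces a genuine $H^1(\mathbb{R}^2)$ function to which the two-dimensional inequality applies.
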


We can now conclude the global well-posedness of \eqref{eqv} in $H$, and so of \eqref{eqNSdimensionless} in $X$.

\begin{theorem}[Global existence]\label{ThGlobal}
Let $v_0\in H$. Equation \eqref{eqvIntegral} has a unique global solution $v$ in $\CC^0([0,+\infty[\,,H)$.
\end{theorem}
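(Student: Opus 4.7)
The plan is to combine the local existence and uniqueness result (Proposition \ref{ThLocalExistence}) with an a priori bound on $\|v(t)\|_{H^1}$ ruling out blow-up in finite time. By the blow-up alternative stated in that proposition, this forces the maximal time of existence to be $+\infty$ and yields a unique global solution in $\CC^0([0,+\infty[,H)$. The helical and divergence-free constraints are preserved by the Navier--Stokes flow, so the solution automatically stays in $H$ once we control its $H^1$ norm.

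For the $L^2$ estimate, I would test equation \eqref{eqv} against $v$. Because $v$ is divergence-free, the pressure gradient and the transport terms $\langle v\cdot\nabla v,v\rangle$ and $a\langle\ulo\cdot\nabla v,v\rangle$ contribute nothing. The forcing $a^2\ulo\cdot\nabla\ulo = -a^2\nabla p^{\ulo}$ is a gradient and likewise drops out. Only the shear term $a\langle v\cdot\nabla\ulo,v\rangle$ remains, and after integration by parts it satisfies
\begin{equation*}
|a\langle v\cdot\nabla\ulo,v\rangle| \leq |a|\,\|\ulo\|_{L^\infty}\|v\|_{L^2}\|\nabla v\|_{L^2} \leq \tfrac{1}{2}\|\nabla v\|_{L^2}^2 + \frac{C a^2}{1+t}\|v\|_{L^2}^2,
\end{equation*}
thanks to the decay $\|\ulo(t)\|_{L^\infty}\leq C/\sqrt{1+t}$ from Lemma \ref{lemmaUloIneq}. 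Gronwall's lemma then produces the polynomial bound $\|v(t)\|_{L^2}^2 \leq \|v_0\|_{L^2}^2(1+t)^{Ca^2}$ and, after integration in time, $\int_0^T \|\nabla v(s)\|_{L^2}^2\,ds < +\infty$ on every bounded interval $[0,T]$.

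For the $H^1$ estimate I would test \eqref{eqv} against $-\Delta v$. Since $\Delta v$ is also divergence-free, both $\nabla p$ and $-a^2\nabla p^{\ulo}$ disappear again. The dangerous nonlinear contribution $\langle v\cdot\nabla v,\Delta v\rangle$ is precisely where the helical Ladyzhenskaya inequality (Lemma \ref{lemmaLadyzhen}) becomes essential: applied to both $v$ and $\nabla v$ it yields
\begin{equation*}
|\langle v\cdot\nabla v,\Delta v\rangle| \leq \|v\|_{L^4}\|\nabla v\|_{L^4}\|\Delta v\|_{L^2} \leq C\,\|v\|_{L^2}^{1/2}\|\nabla v\|_{L^2}\|\Delta v\|_{L^2}^{3/2}.
\end{equation*}
The two cross terms $a\langle \ulo\cdot\nabla v,\Delta v\rangle$ and $a\langle v\cdot\nabla\ulo,\Delta v\rangle$ are controlled using the $L^\infty$ bounds on $\ulo$ and $\nabla\ulo$ from Lemma \ref{lemmaUloIneq}. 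After applying Young's inequality to absorb a fraction of $\|\Delta v\|_{L^2}^2$ on the left-hand side, one obtains an inequality of the form $\phi' \leq g(t)\phi + h(t)$ with $\phi(t) = \|\nabla v(t)\|_{L^2}^2$ and $g(t) = C\|v(t)\|_{L^2}^2\|\nabla v(t)\|_{L^2}^2 + \text{l.o.t.}$; both $g$ and $h$ are locally integrable on $[0,+\infty[$ by the $L^2$ step above, and another application of Gronwall prevents any finite-time blow-up of $\|\nabla v\|_{L^2}$.

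The main obstacle is the cubic criticality of the naive three-dimensional energy method: the generic Sobolev bound $\|v\|_{L^4}\leq C\|v\|_{L^2}^{1/4}\|\nabla v\|_{L^2}^{3/4}$ would lead to a super-quadratic Riccati-type inequality $\phi' \leq C\phi^3$, which can blow up in finite time. It is precisely the helical symmetry, via the two-dimensional exponents in Lemma \ref{lemmaLadyzhen}, that turns this into the linear Gronwall inequality above. Combining the resulting bound $\sup_{[0,T]}\|v(t)\|_{H^1} < +\infty$ with the blow-up criterion of Proposition \ref{ThLocalExistence} then closes the argument.
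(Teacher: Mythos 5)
Your proof is correct and takes essentially the same route as the paper: local existence with the blow-up alternative of Proposition \ref{ThLocalExistence}, an $L^2$ energy estimate exploiting the decay of $\ulo$ from Lemma \ref{lemmaUloIneq}, and an $H^1$ estimate in which the helical Ladyzhenskaya inequality (Lemma \ref{lemmaLadyzhen}) turns the cubic term into a linear Gronwall inequality with locally integrable coefficient. The only (harmless) deviations are that you handle the shear term via integration by parts and $\|\ulo\|_{L^\infty}$ where the paper uses $\|\nabla\ulo\|_{L^\infty}$ directly, and that you split the nonlinear term as $\|v\|_{L^4}\|\nabla v\|_{L^4}\|\Delta v\|_{L^2}$ instead of first reducing it to $\int_\Omega |\nabla v|^3$, which merely replaces the paper's Gronwall weight $\|\nabla v\|_{L^2}^2$ by the equally locally integrable $\|v\|_{L^2}^2\|\nabla v\|_{L^2}^2$.
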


\begin{proof}
From the properties of the heat semigroup $(S(t))_{t\geqslant0}$ we know that $v$ is regular once $t>0$, and therefore satisfies equations \eqref{eqv} as long as \eqref{eqvIntegral}  holds. 
We can hence derive from \eqref{eqv} the following energy estimates.

Let us first consider the scalar product in $(L^2(\Omega))^3$ of \eqref{eqv} against $v$, where
\[
\int_\Omega v(t)\cdot\partial_tv(t) = \int_\Omega \partial_t\left(\frac{1}{2}v(t)^2\right) = \frac{1}{2}\partial_t\|v(t)\|_{L^2}^2
\quad\text{ and }\quad
\int_\Omega v(t)\cdot\Delta v(t) = -\|\nabla v(t)\|_{L^2}^2
\]
by doing an integration by parts to get the right-hand-side equality; where
\[
\int_\Omega v(t)\cdot\nabla(p(t) - a^2p^\ulo(t)) = 0\,, \quad
\int_\Omega v(t)\cdot(v(t)\cdot\nabla)v(t) = 0 
\quad\text{ and }\quad
\int_\Omega v(t)\cdot(\ulo(t)\cdot\nabla)v(t) = 0
\]
by integrating by parts and because $\di v(t)=0$; and where
\[
\left|\int_\Omega v(t)\cdot(v(t)\cdot\nabla)\ulo(t)\right| 
\leqslant \|v(t)\|_{L^2}\|v(t)\cdot\nabla\ulo(t)\|_{L^2}
\leqslant \|v(t)\|_{L^2}^2 \|\nabla\ulo(t)\|_{L^\infty}
\leqslant \frac{C}{1+t}\|v(t)\|_{L^2}^2
\]
by using the Cauchy-Schwarz inequality, $C>0$ being a universal constant given by Lemma \ref{lemmaUloIneq}.
It thus holds for all $s>0$
\[
\frac{1}{2} \partial_s\|v(s)\|_{L^2}^2 \leqslant -\|\nabla v(s)\|_{L^2}^2 + \frac{1}{2}\frac{2C|a|}{1+s}\|v(s)\|_{L^2}^2\,,
\]
which we multiply by the factor $(1+s)^{-2C|a|}$ and integrate for $s$ between 0 and $t>0$ to get the classical energy estimate
\begin{equation}\label{eqEstimatev}
\|v(t)\|_{L^2}^2 + 2\int_0^t \|\nabla v(s)\|_{L^2}^2 \,ds \leqslant \|v_0\|_{L^2}^2 (1+t)^{2C|a|}.
\end{equation}
We did not use so far that $v$ is helical.

Secondly, we consider the scalar product of \eqref{eqv} against $\Delta v$. We write
\[
\int_\Omega \Delta v(t)\cdot\partial_tv(t) = -\frac{1}{2}\partial_t\|\nabla v(t)\|_{L^2}^2
\quad\text{ and }\quad
\int_\Omega \Delta v(t)\cdot\Delta v(t) = \|\Delta v(t)\|_{L^2}^2
\]
by using an integration by parts to get the left-hand side equality, and we see that $\int_\Omega \Delta v(t)\cdot\nabla(p(t) - a^2p^\ulo(t)) = 0$ because $\di(\Delta v(t)) = \Delta\di v(t) = 0$.
We then use the Cauchy-Schwarz and Young's inequalities to get
\begin{multline*}
\left|\int_\Omega \Delta v(t)\cdot a(v(t)\cdot\nabla)\ulo(t)\right| 
\leqslant \|\Delta v(t)\|_{L^2}|a|\|v(t)\cdot\nabla\ulo(t)\|_{L^2}
\\
\leqslant \|\Delta v(t)\|_{L^2} |a|\|v(t)\|_{L^2}\|\nabla\ulo(t)\|_{L^\infty}
\leqslant \frac{1}{4}\|\Delta v(t)\|_{L^2}^2 + \frac{C^2|a|^2}{(1+t)^2}\|v(t)\|_{L^2}^2
\end{multline*}
and
\begin{multline*}
\left|\int_\Omega \Delta v(t)\cdot a(\ulo(t)\cdot\nabla)v(t)\right| 
\leqslant \|\Delta v(t)\|_{L^2}|a|\|\ulo(t)\cdot\nabla v(t)\|_{L^2}
\\
\leqslant \|\Delta v(t)\|_{L^2} |a|\|\ulo(t)\|_{L^\infty}\|\nabla v(t)\|_{L^2}
\leqslant \frac{1}{4}\|\Delta v(t)\|_{L^2}^2 + \frac{C^2|a|^2}{(1+t)}\|\nabla v(t)\|_{L^2}^2
\end{multline*}
where $C>0$ is a universal constant given by Lemma \ref{lemmaUloIneq}.
Finally, because $\di v=0$ and using the Ladyzhenskaya inequality for helical maps (lemma \ref{lemmaLadyzhen}) we have that
\begin{multline*}
\left|\int_\Omega \Delta v(t)\cdot (v(t)\cdot\nabla)v(t)\right| 
\leqslant \int_\Omega |\nabla v(t)|^3
\leqslant \|\nabla v(t)\|_{L^4}^2\|\nabla v(t)\|_{L^2}
\\
\leqslant \left(\!\frac{C_0}{L}\!\right)^{\frac{1}{2}}\|\Delta v(t)\|_{L^2} \|\nabla v(t)\|_{L^2}^2
\leqslant \frac{1}{4}\|\Delta v(t)\|_{L^2}^2 + \frac{C_0}{L}\,\|\nabla v(t)\|_{L^2}^4.
\end{multline*}
Considering all of the above computation, for $s>0$ we have
\[
\frac{1}{2} \partial_s\|\nabla v(s)\|_{L^2}^2 \leqslant -\frac{1}{4}\|\Delta v(s)\|_{L^2}^2 + \frac{C^2|a|^2}{(1+s)^2}\|v(s)\|_{L^2}^2 + \frac{1}{2}K(s)\|\nabla v(s)\|_{L^2}^2
\]
where 
\begin{equation}\label{eqdefK}
K(s) = 2\frac{C^2|a|^2}{1+s} + 2\frac{C_0}{L}\|\nabla v(s)\|_{L^2}^2
\end{equation}
is locally integrable on $[0,+\infty[$ according to \eqref{eqEstimatev}, in such a way that $t\mapsto\int_0^tK(s)ds$ does not blow up in finite time. Multiplying this last differential inequality by $\exp(-\int_0^sK(\sigma)d\sigma)$ then integrating it in time for $s$ between 0 and $t>0$ gives that
\begin{equation}\label{eqEstimateGradv}
\|\nabla v(t)\|_{L^2}^2 + \frac{1}{2}\int_0^t \|\Delta v(s)\|_{L^2}^2 \,ds \leqslant \|\nabla v_0\|_{L^2}^2 \,e^{\int_0^t K(\sigma)d\sigma} + \int_0^t e^{\int_s^t K(\sigma)d\sigma}\frac{2C^2|a|^2}{(1+s)^2}\|v(s)\|_{L^2}^2\,ds
\end{equation}
where the member on the right-hand side does not blow up in finite time.

Thanks to \eqref{eqEstimatev} and \eqref{eqEstimateGradv} we can give a bound on $\|v(t)\|_{H^1}$ that does not blow up in finite time, which ensures that the solution is global. 
\end{proof}

The bound on $\|v(t)\|_{H^1}$ given in the previous proof by \eqref{eqEstimatev} and \eqref{eqEstimateGradv} tends towards infinity when $t\to+\infty$. The point of the following sections will be to write better energy estimates on $v$, and finally show that $\|v(t)\|_{H^1}$ tends towards 0 as $t\to+\infty$ in the sense of Theorem \ref{ThIntro}.

\section{Estimates for the Zero-Vertical-Mean Component}
\label{SectionEstimates}

Thanks to estimates \eqref{eqMaekawaIneq1} and \eqref{eqMaekawaIneq2} from Lemma \ref{lemmaUloIneq}, it is possible to refine inequality \eqref{eqEstimatev}. Indeed, due to Gallay and Maekawa \cite{GalMae12}, one has the following energy estimate.

\begin{proposition}[Logarithmic energy estimate for the $H^1$ component]\label{propMaekawaIneq}
The solution $v$ of \eqref{eqv} satisfies the following energy estimate
\begin{equation}\label{eqMaekawaIneq}
\|v(t)\|_{L^2}^2 + 2\int_0^t \|\nabla v(s)\|_{L^2}^2\,ds \leqslant C (1+\ln(1+t))
\end{equation}
for all $t\geqslant0$, where the constant $C = C(L,a,\|v_0\|_{L^2}) > 0$ is an increasing function of $L$, $a$, and the initial data $\|v_0\|_{L^2}$.
\end{proposition}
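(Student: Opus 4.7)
My plan follows the trick of Gallay and Maekawa \cite{GalMae12}: rather than compare $v(t)=u(t)-a\ulo(t)$ to the \emph{moving} Oseen vortex, I compare $u$ to a \emph{frozen} one $a\ulo(T)$ for an arbitrary reference time $T\geq 0$, and ultimately take $T$ equal to the running time. So fix $T\geq 0$ and set, for $t\in[0,T]$,
\[
w(t) := u(t)-a\ulo(T) = v(t)+a(\ulo(t)-\ulo(T)),
\]
which by Lemma \ref{lemmaUloIneq} belongs to $H$ and satisfies $w(T)=v(T)$ and, by \eqref{eqMaekawaIneq1}, $\|w(0)\|_{L^2}^2\leq 2\|v_0\|_{L^2}^2+2Ca^2L\ln(1+T)$.

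Since $a\ulo(T)$ is time-independent and divergence-free, $w$ solves $\partial_t w+(u\cdot\nabla)u+\nabla p=\Delta w+a\Delta\ulo(T)$. Testing in $L^2(\Omega)$ by $w$ and expanding $u=w+a\ulo(T)$ in the nonlinearity, the standard trilinear cancellations apply (using $\di w=\di\ulo(T)=0$ and the fact that $a^2\ulo(T)\cdot\nabla\ulo(T)=-a^2\nabla p^{\ulo(T)}$ is a gradient), leaving
\[
\tfrac12\tfrac{d}{dt}\|w\|_{L^2}^2+\|\nabla w\|_{L^2}^2=-a\int\nabla w:\nabla\ulo(T)-a\int w\cdot(w\cdot\nabla)\ulo(T).
\]
The decisive point is that every factor of $\ulo$ on the right is now evaluated at the \emph{frozen} time $T$. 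Combining Lemma \ref{lemmaUloIneq}, which gives $\|\nabla\ulo(T)\|_{L^\infty}\leq C/(1+T)$, with the $L^2$ bound $\|\nabla\ulo(T)\|_{L^2}^2\leq CL/(1+T)$ (obtained by letting $t_2\to\infty$ in \eqref{eqMaekawaIneq2}), Cauchy--Schwarz and Young yield
\[
\tfrac{d}{dt}\|w\|_{L^2}^2+\|\nabla w\|_{L^2}^2\leq\frac{C(a,L)}{1+T}\bigl(1+\|w\|_{L^2}^2\bigr),\qquad 0\leq t\leq T.
\]

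Gr\"onwall on $[0,T]$, combined with the elementary inequality $t/(1+T)\leq 1$, produces the $T$-\emph{uniform} bound $\|w(t)\|_{L^2}^2+\int_0^t\|\nabla w\|_{L^2}^2\,ds\leq C(a,L)(\|w(0)\|_{L^2}^2+1)$. Evaluating at $t=T$ and inserting the estimate on $\|w(0)\|_{L^2}^2$ yields $\|v(T)\|_{L^2}^2\leq C(1+\ln(1+T))$. The dissipation bound is then transferred from $\nabla w$ to $\nabla v=\nabla w-a\nabla(\ulo-\ulo(T))$ at the cost of a second logarithm, produced by $\int_0^T\|\nabla(\ulo(s)-\ulo(T))\|_{L^2}^2\,ds\leq CL\ln(1+T)$, itself a consequence of \eqref{eqMaekawaIneq2}. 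Setting $T=t$ gives \eqref{eqMaekawaIneq}.

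The main obstacle is conceptual: the naive choice of moving reference state $a\ulo(t)$ forces a Gr\"onwall coefficient of order $|a|/(1+t)$ and hence polynomial growth of $\|v(t)\|_{L^2}$, as in \eqref{eqEstimatev}. The frozen reference $a\ulo(T)$ sacrifices the exact flow structure in exchange for a Gr\"onwall coefficient that is $T$-uniformly integrable on $[0,T]$, the logarithm being paid once and for all through the shift $\ulo(T)-\ulo(0)$ via \eqref{eqMaekawaIneq1}.
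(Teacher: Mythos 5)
Your proof is correct, and it is a close variant of the paper's own argument rather than a genuinely different route: both implement the Gallay--Maekawa device of measuring $u$ against a reference vortex adapted to a later observation time, pricing each change of reference at one logarithm via Lemma \ref{lemmaUloIneq}, and then diagonalizing (your $T=t$, the paper's $\tau=t$). The difference lies in the choice of reference. The paper compares $u$ with the \emph{time-shifted} vortex $\ulo_\tau(t)=\ulo(t+\tau)$, which is still an exact solution of the heat equation, so $v'(t)=v(t)+a(\ulo(t)-\ulo_\tau(t))$ satisfies literally equation \eqref{eqv} with $\ulo$ replaced by $\ulo_\tau$, and the already-established estimate \eqref{eqEstimatev} applies verbatim with the improved Gr\"onwall factor $\bigl(\frac{1+t+\tau}{1+\tau}\bigr)^{2C|a|}$, bounded at $\tau=t$. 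You instead \emph{freeze} the vortex at $a\ulo(T)$; this sacrifices the solution property and creates the extra linear source $a\Delta\ulo(T)$, which you correctly absorb via $\|\nabla\ulo(T)\|_{L^2}^2\leqslant CL/(1+T)$, in exchange for a Gr\"onwall coefficient $C(a,L)/(1+T)$ that is constant in $t$ and for the exact endpoint identity $w(T)=v(T)$ (the paper instead pays a harmless extra $\ln\frac{1+2t}{1+t}\leqslant\ln 2$ when converting $v'$ back to $v$ at $t=\tau$). One pedantic point: deducing $\|\nabla\ulo(T)\|_{L^2}^2\leqslant CL/(1+T)$ by letting $t_2\to\infty$ in \eqref{eqMaekawaIneq2} requires knowing that $\nabla\ulo(t_2)\to0$ in $L^2$; this holds because \eqref{eqMaekawaIneq2} makes $(\nabla\ulo(t))_{t\geqslant0}$ Cauchy in $L^2$ as $t\to\infty$ and the limit must vanish since $\|\nabla\ulo(t)\|_{L^\infty}\leqslant C/(1+t)\to0$ — or one can simply invoke the explicit identity $\|\nabla\ulo(t)\|_{L^2}^2=C'L/(1+t)$, which the paper uses in the proof of Corollary \ref{corCorMaekawaIneq}. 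With that remark supplied, your energy identity, the $T$-uniform Gr\"onwall bound on $[0,T]$, and the two logarithmic transfers (through $\|w(0)\|_{L^2}^2$ via \eqref{eqMaekawaIneq1} and through the dissipation via \eqref{eqMaekawaIneq2}) are all sound and yield \eqref{eqMaekawaIneq} with the stated dependence of $C$ on $L$, $a$ and $\|v_0\|_{L^2}$.
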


\begin{proof}
Let us consider the functions $\ulo_\tau(t) = \ulo(t+\tau)$ and $v'(t)= v(t) + a(\ulo(t) - \ulo_\tau(t))$ for some fixed $\tau\geqslant0$. We thus have $u = v' + a\ulo_\tau$, so $v'$ satisfies the equation
\[
\partial_tv' + v'\cdot\nabla v' + a(\ulo_\tau\cdot\nabla v' + v'\cdot\nabla \ulo_\tau) = \Delta v' - \nabla p'
\]
obtained in the same way as \eqref{eqv}, for some pressure $p'$ incorporating the term $a^2\ulo_\tau\cdot\nabla\ulo_\tau$. Following the calculation leading to inequality \eqref{eqEstimatev}, we get that for all $t\geqslant0$
\[
\|v'(t)\|_{L^2}^2 + 2\int_0^t \|\nabla v'(s)\|_{L^2}^2 \,ds \leqslant \|v'_0\|_{L^2}^2 \left(\frac{1+t+\tau}{1+\tau}\right)^{2C|a|}.
\]
On the other hand, estimates \eqref{eqMaekawaIneq1} and \eqref{eqMaekawaIneq2} give that for every $t\geqslant0$
\[
\|v(t)\|_{L^2}^2 
\leqslant 2\|v'(t)\|_{L^2}^2 + 2a^2\|\ulo(t)-\ulo_\tau(t)\|_{L^2}^2
\leqslant 2\|v'(t)\|_{L^2}^2 + 2CLa^2\ln\left(\frac{1+t+\tau}{1+t}\right)
\]
and
\begin{align*}
\int_0^t\|\nabla v(s)\|_{L^2}^2\,ds 
&\leqslant 
\int_0^t 2\|\nabla v'(s)\|_{L^2}^2 + 2a^2\|\nabla\ulo(s)-\nabla\ulo_\tau(s)\|_{L^2}^2\,ds
\\&\leqslant 
\int_0^t 2\|\nabla v'(s)\|_{L^2}^2 + 2CLa^2\left(\frac{1}{1+s} - \frac{1}{1+s+\tau}\right)\,ds
\\&\leqslant 
\int_0^t 2\|\nabla v'(s)\|_{L^2}^2\,ds + 2CLa^2\ln(1+t)
\end{align*}
and 
\[
\|v'_0\|_{L^2}^2 
\leqslant 2\|v_0\|_{L^2}^2 + 2a^2\|\ulo(0)-\ulo_\tau(0)\|_{L^2}^2
\leqslant 2\|v_0\|_{L^2}^2 + 2CLa^2\ln(1+\tau).
\]
By combining these four inequations and setting $t = \tau$ we deduce directly the desired estimate.
\end{proof}

From estimate \eqref{eqMaekawaIneq} and inequality \eqref{eqEstimateGradv} result the following corollary.

\begin{corollary}\label{corMaekawaIneq}
The solution $v$ of \eqref{eqv} satisfies the following energy estimate
\begin{equation}\label{eqMaekawaIneqCor}
\|\nabla v(t)\|_{L^2}^2 + \frac{1}{2}\int_0^t \|\Delta v(s)\|_{L^2}^2\,ds 
\leqslant C\left(\|\nabla v_0\|_{L^2}^2 + 1\right) (1+t)^C
\end{equation}
for all $t\geqslant0$, where the constant $C = C(L,a,\|v_0\|_{L^2}) > 0$ is an increasing function of $a$ and the initial data $\|v_0\|_{L^2}$.

\end{corollary}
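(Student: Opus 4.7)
The plan is to feed the logarithmic bound from Proposition \ref{propMaekawaIneq} into the a priori inequality \eqref{eqEstimateGradv} established in the proof of Theorem \ref{ThGlobal}, thereby turning its two ``non-blow-up in finite time'' factors into explicit polynomial-in-$t$ bounds.

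First, I would control the exponential factor $e^{\int_0^t K(\sigma)\,d\sigma}$. By the definition \eqref{eqdefK} of $K$,
\[
\int_0^t K(\sigma)\,d\sigma = 2C^2|a|^2\ln(1+t) + \frac{2C_0}{L}\int_0^t \|\nabla v(\sigma)\|_{L^2}^2\,d\sigma.
\]
The second integral is exactly what Proposition \ref{propMaekawaIneq} controls: it is bounded by $C(L,a,\|v_0\|_{L^2})(1+\ln(1+t))$. Both contributions are therefore of the form ``constant $+$ constant $\times \ln(1+t)$'', so exponentiating yields
\[
e^{\int_0^t K(\sigma)\,d\sigma} \leqslant C(1+t)^{C}
\]
for a constant $C = C(L,a,\|v_0\|_{L^2})$. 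The very same bound holds for $e^{\int_s^t K(\sigma)\,d\sigma}$, uniformly in $s\in[0,t]$.

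Second, I would handle the forcing integral in \eqref{eqEstimateGradv}. Pulling out the uniform bound for the exponential and using once more the logarithmic estimate \eqref{eqMaekawaIneq} to get $\|v(s)\|_{L^2}^2 \leqslant C(1+\ln(1+s))$, one obtains
\[
\int_0^t e^{\int_s^t K(\sigma)\,d\sigma}\,\frac{2C^2|a|^2}{(1+s)^2}\|v(s)\|_{L^2}^2\,ds
\leqslant C(1+t)^C \int_0^{+\infty}\frac{1+\ln(1+s)}{(1+s)^2}\,ds,
\]
and the remaining integral is a convergent constant. Hence this term is also bounded by $C(1+t)^C$.

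Plugging these two bounds into \eqref{eqEstimateGradv} yields the desired inequality, with the linear dependence on $\|\nabla v_0\|_{L^2}^2$ coming from the first term of \eqref{eqEstimateGradv} and the $+1$ coming from the forcing term. There is no real obstacle here; the argument is essentially bookkeeping, the genuine work having been done in Proposition \ref{propMaekawaIneq}, which allows one to replace the uncontrolled quantity $\int_0^t\|\nabla v\|_{L^2}^2$ inside the Grönwall exponential by a logarithm.
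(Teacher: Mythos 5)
Your proposal is correct and follows exactly the paper's own argument: bound $\int_0^t K(\sigma)\,d\sigma$ logarithmically via Proposition \ref{propMaekawaIneq}, exponentiate to get the polynomial factor $(1+t)^C$ (uniformly in $s$ since $K\geqslant 0$), and observe that the forcing integral converges because $\|v(s)\|_{L^2}^2$ grows only logarithmically while $(1+s)^{-2}$ is integrable. Nothing is missing; this is the same bookkeeping the paper performs in its proof of Corollary \ref{corMaekawaIneq}.
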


\begin{proof}
Let us consider estimate \eqref{eqEstimateGradv} and focus on the right-hand side of the inequality. Given Proposition \ref{propMaekawaIneq}, for all $t\geqslant0$ we have that $\int_0^t \|\nabla v(s)\|_{L^2}^2\,ds \leqslant C_1+C_1\ln(1+t)$ for some constant $C_1 = C_1(L,a,\|v_0\|_{L^2}) > 0$. Defining $K(s) = 2\frac{C^2|a|^2}{1+s} + 2\frac{C_0}{L}\|\nabla v(s)\|_{L^2}^2$ as in \eqref{eqdefK}, we thus have $\int_0^t K(s)ds \leqslant C_2+C_2\ln(1+t)$ for some constant $C_2 = C_2(L,a,\|v_0\|_{L^2}) > 0$ and therefore
\begin{multline*}
\|\nabla v_0\|_{L^2}^2 \,e^{\int_0^t K(\sigma)d\sigma} + \int_0^t e^{\int_s^t K(\sigma)d\sigma}\frac{2C^2|a|^2}{(1+s)^2}\|v(s)\|_{L^2}^2\,ds\\
\leqslant e^{C_2}\|\nabla v_0\|_{L^2}^2 \,(1+t)^{C_2} + e^{C_2}(1+t)^{C_2}\int_0^t \frac{2C^2|a|^2}{(1+s)^2}\|v(s)\|_{L^2}^2\,ds.
\end{multline*}
Finally, according to \eqref{eqMaekawaIneq} again, for all $s\geqslant0$ one has $\|v(s)\|_{L^2}^2 \leqslant C_1+C_1\ln(1+s)$ so
\[
\forall t\geqslant0\qquad
\int_0^t \frac{2C^2|a|^2}{(1+s)^2}\|v(s)\|_{L^2}^2\,ds
\leqslant \int_0^{+\infty} \frac{2C^2|a|^2}{(1+s)^2}\|v(s)\|_{L^2}^2\,ds
\leqslant C_3(L,a,\|v_0\|_{L^2}),
\]
which completes the proof.
\end{proof}

\begin{corollary}\label{corCorMaekawaIneq}
Let $u$ be the solution to \eqref{eqNSdimensionless} with initial data $u_0\in X$. The exists a constant $C>0$ depending on $L$, $a$ and $u_0$ such that for all $t\geqslant0$
\begin{equation}\label{eqMaekawaIneqCorCorGradu}
\int_0^t \|\nabla u(s)\|_{L^2}^2\,ds 
\leqslant C(1 + \ln(1+t)).
\end{equation}
and
\begin{equation}\label{eqMaekawaIneqCorCorDeltau}
\|\nabla u(t)\|_{L^2}^2 + \frac{1}{2}\int_0^t \|\Delta u(s)\|_{L^2}^2\,ds 
\leqslant C (1+t)^C.
\end{equation}
\end{corollary}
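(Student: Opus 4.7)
The plan is to split $u(t) = v(t) + a\ulo(t)$ and estimate the two contributions separately, using the $v$-bounds already obtained together with direct estimates on $\ulo$. The elementary inequality $\|A+B\|_{L^2}^2 \leqslant 2\|A\|_{L^2}^2 + 2\|B\|_{L^2}^2$, applied with $(A,B) = (\nabla v, a\nabla\ulo)$ and then $(A,B) = (\Delta v, a\Delta\ulo)$, reduces the problem to estimating the pieces coming from $v$ (handled by Proposition \ref{propMaekawaIneq} and Corollary \ref{corMaekawaIneq}) and from $\ulo$ separately.

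For the Oseen contribution, the key ingredients are the $L^2$ decays
\[
\|\nabla\ulo(t)\|_{L^2(\Omega)}^2 \leqslant \frac{C(L)}{1+t}
\qquad\text{and}\qquad
\|\Delta\ulo(t)\|_{L^2(\Omega)}^2 \leqslant \frac{C(L)}{(1+t)^2}.
\]
These can be read off directly from the self-similar structure of \eqref{equlo}: setting $R = r/\sqrt{1+t}$, one has $\ulo_\theta(t,r) = (1+t)^{-1/2}\tilde g(R)$ with $\tilde g(R) = \frac{1}{2\pi R}(1-e^{-R^2/4})$, so the chain rule produces a factor $(1+t)^{-1/2}$ for each derivative, and the change of variables $r\,dr = (1+t)R\,dR$ in the $L^2$ integral yields the claimed rates. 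Equivalently, the same bounds follow from combining the heat dissipation identity $\tfrac{d}{dt}\|\nabla\ulo\|_{L^2}^2 = -2\|\Delta\ulo\|_{L^2}^2$ with \eqref{eqMaekawaIneq2} of Lemma \ref{lemmaUloIneq}.

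With these in hand the conclusion follows by direct integration. For \eqref{eqMaekawaIneqCorCorGradu}, integrating $\|\nabla u(s)\|_{L^2}^2 \leqslant 2\|\nabla v(s)\|_{L^2}^2 + 2a^2\|\nabla\ulo(s)\|_{L^2}^2$ between $0$ and $t$ gives the desired bound, since Proposition \ref{propMaekawaIneq} handles the first term while the second contributes at most $2a^2C(L)\ln(1+t)$ via $\int_0^t ds/(1+s) = \ln(1+t)$. For \eqref{eqMaekawaIneqCorCorDeltau}, the pointwise-in-time estimate on $\|\nabla u(t)\|_{L^2}^2$ comes from Corollary \ref{corMaekawaIneq} together with the uniform bound on $\|\nabla\ulo(t)\|_{L^2}^2$, while integrating $\|\Delta u\|_{L^2}^2 \leqslant 2\|\Delta v\|_{L^2}^2 + 2a^2\|\Delta\ulo\|_{L^2}^2$ absorbs the $\ulo$-contribution into a constant thanks to the convergence of $\int_0^\infty ds/(1+s)^2$, and the $v$-contribution is bounded by $C(1+t)^C$ by Corollary \ref{corMaekawaIneq}.

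The only non-trivial step is establishing the $L^2$ decay of $\nabla\ulo$ and $\Delta\ulo$; this is standard for the self-similar Oseen profile and is really the only place where the explicit formula \eqref{equlo} enters the argument. Everything else amounts to assembling the already proven estimates through the triangle inequality.
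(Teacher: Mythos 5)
Your proposal is correct and follows essentially the same route as the paper's proof: decompose $u = v + a\ulo$, handle the $v$-part with Proposition \ref{propMaekawaIneq} and Corollary \ref{corMaekawaIneq}, and use the decay rates $\|\nabla\ulo(s)\|_{L^2}^2 = C'\frac{L}{1+s}$ and $\|\Delta\ulo(s)\|_{L^2}^2 = C''\frac{L}{(1+s)^2}$ for the Oseen part, which the paper simply states as exact identities while you verify them via the self-similar scaling of \eqref{equlo}. The only difference is cosmetic: you make explicit the scaling computation (and an alternative via the heat-dissipation identity) that the paper leaves to the reader.
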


\begin{proof}
Given Proposition \ref{propMaekawaIneq}, given that $u = v + a\ulo$ and that $\|\nabla\ulo(s)\|_{L^2}^2 = C'\frac{L}{1+s}$ where $C'>0$ is a universal constant, one has for all $t\geqslant0$ 
\[
\int_0^t \|\nabla u(s)\|_{L^2}^2\,ds \leqslant C_1(1+\ln(1+t))
\]
where $C_1>0$ depends on $L$, $a$ and $u_0$.
Moreover, given that $\|\Delta\ulo(s)\|_{L^2}^2 = C''\frac{L}{(1+s)^2}$ for some universal constant $C''>0$, the previous corollary implies for all $t\geqslant0$ 
\[
\|\nabla u(t)\|_{L^2}^2 + \frac{1}{2}\int_0^t \|\Delta u(s)\|_{L^2}^2\,ds 
\leqslant C_2 (1+t)^{C_2}
\]
where $C_2>0$ depends on $L$, $a$ and $u_0$.
\end{proof}

Following the approach from \cite{GallayRoussierMichon}, 
we will now decompose $u$ and $v$ as the sum of a vertical mean value plus a zero-vertical-mean component.
Let us recall the notations introduced in \eqref{eqdefQ} and \eqref{eqdefumuv}, and that $1-Q$ is hence the orthogonal projection operator on the vector fields of $(H^1(\Omega))^3$ who have a zero mean value relative to the vertical coordinate.
One can see that $Q$ is also well defined on $\ulo(t)$ for all $t\geqslant0$ and that $Q\ulo(t) = \ulo(t)$. For every $u\in X$, we thus have $u = \um + \uv$ with $\um = \vm + a\ulo \in X$ and $\uv = \vv\in H$. 

Because $\uv = (1-Q)u$ has zero vertical mean value, the inequality of Poincaré implies that its $L^2$-norm and the $L^2$-norm of its derivatives decay exponentially towards zero. This is the object of the following proposition.

\begin{lemma}[Poincaré's inequality]\label{ThPoincare}
One has $\|\vv\|_{L^2} \leqslant L \|\nabla \vv\|_{L^2}$ for any vector field $v$ in $\H$.
\end{lemma}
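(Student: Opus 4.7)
The plan is to reduce this to the one-dimensional Wirtinger inequality on the circle $\RR/2\pi L\ZZ$. Since $\vv = (1-Q)v$ has zero mean value with respect to $z$, and since the Cartesian basis does not depend on $z$ (so $Q$ commutes with the extraction of Cartesian components), for each Cartesian component $\vv_i$ and for a.e. $(x,y)\in\RR^2$ the scalar function $z\mapsto \vv_i(x,y,z)$ is $2\pi L$-periodic with vanishing average over one period.

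The key step is then the sharp Wirtinger inequality: if $f\in H^1(\RR/2\pi L\ZZ)$ satisfies $\int_0^{2\pi L} f(z)\,dz=0$, then $\int_0^{2\pi L}|f|^2\,dz\leq L^2\int_0^{2\pi L}|\partial_z f|^2\,dz$. This can be obtained by Fourier expansion $f(z)=\sum_{k\in\ZZ\setminus\{0\}} c_k e^{ikz/L}$ and Parseval: the $L^2$-norm equals $2\pi L\sum_{k\neq0}|c_k|^2$ while the $L^2$-norm of $\partial_z f$ equals $2\pi L\sum_{k\neq0}(k^2/L^2)|c_k|^2$, and $k^2\geq1$ for $k\neq0$. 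The constant $L$ is sharp, attained on the modes $e^{\pm iz/L}$.

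Applying this pointwise in $(x,y)\in\RR^2$ to each of the three Cartesian components of $\vv$, integrating over $\RR^2$ by Fubini, summing the three components, and finally bounding $\|\partial_z\vv\|_{L^2}\leq\|\nabla\vv\|_{L^2}$, one obtains
\[
\|\vv\|_{L^2}^2 \;\leq\; L^2\,\|\partial_z\vv\|_{L^2}^2 \;\leq\; L^2\,\|\nabla\vv\|_{L^2}^2,
\]
which is the desired Poincar\'e inequality. There is no substantive obstacle: the result is an immediate consequence of the one-dimensional Wirtinger inequality, and the helical or divergence-free structure of $v$ plays no role here. The only minor point is to justify the Fourier expansion for $\vv\in \H$, which is handled either by density of smooth functions or by noting that the estimate holds mode-by-mode in $L^2(\RR^2;H^1(\RR/2\pi L\ZZ))$.
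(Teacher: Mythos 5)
Your proof is correct and coincides with the standard argument this lemma rests on: the paper states it without proof because it is the classical Poincar\'e--Wirtinger inequality in the periodic vertical variable, and your mode-by-mode computation (lowest nonzero frequency $1/L$ on $\RR/2\pi L\ZZ$, hence the constant $L$), applied to each Cartesian component of $\vv=(1-Q)v$, integrated over $\RR^2$ by Fubini, and concluded via $\|\partial_z\vv\|_{L^2}\leqslant\|\nabla\vv\|_{L^2}$, is exactly that argument. You are also right that neither the helical symmetry nor the divergence-free condition plays any role here.
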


\begin{proposition}[Energy estimates for the zero-vertical-mean velocity field]\label{lemmaEstimatesuv}
Let $u$ be the solution to \eqref{eqNSdimensionless} with initial condition $u_0\in X$. We have the following energy estimates. 
\begin{enumerate}
\item[(i)]
For all $t>0$, 
\begin{equation}\label{eqEstimateuv}
\|\uv(t)\|_{L^2}^2 + \frac{1}{2}\int_0^t e^{-\frac{(t-s)}{L^2} + \int_s^t k\ver}\ \|\nabla\uv(s)\|_{L^2}^2\,ds \leqslant \|\uv_0\|_{L^2}^2\ e^{-\frac{t}{L^2} + \int_0^t k\ver}
\end{equation}
where $k\ver(t) = \frac{2C_0}{L}\|\nabla \um(t)\|_{L^2}^2$, $C_0$ being the universal constant from the Ladyzhenskaya inequality of Lemma \ref{lemmaLadyzhen}.
\item[(ii)]
For all $t>0$, 
\begin{equation}\label{eqEstimateGraduv}
\|\nabla\uv(t)\|_{L^2}^2 + \frac{1}{2}\int_0^t e^{-\frac{(t-s)}{L^2} + \int_s^t K\ver}\ \|\Delta\uv(s)\|_{L^2}^2\,ds \leqslant C\, e^{-\frac{t}{L^2} + C\ln(1+t)}
\end{equation}
where $K\ver(t) = \frac{8\,C_0}{L}\|\nabla u(t)\|_{L^2}^2$ and $C>0$ depends on $L$, $a$ and $u_0$.
\item[(iii)]
For all $t\geqslant 1$,
\begin{equation}\label{eqEstimateLaplaceuv}
\|\Delta\uv(t)\|_{L^2}^2 + \frac{1}{2}\int_1^t e^{-\frac{(t-s)}{L^2} + \int_s^t \mathcal{K}\ver}\ \|\nabla\Delta\uv(s)\|_{L^2}^2\,ds \leqslant C'\, e^{-\frac{t}{L^2} + C'\ln(1+t)}
\end{equation}
where $\mathcal{K}\ver(t) = \frac{36\,C_0}{L}\|\nabla u(t)\|_{L^2}^2$ and $C'>0$ depends on $L$, $a$ and $u_0$.
\end{enumerate}
\end{proposition}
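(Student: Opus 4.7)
The plan is to derive the evolution equation satisfied by $\uv$ by applying the projector $1-Q$ to the Navier--Stokes equation \eqref{eqNSdimensionless}, then to test the resulting equation successively against $\uv$, $-\Delta\uv$, and $\Delta^2\uv$. Writing $u = \um + \uv$ and using that $\um$ and $\ulo$ are $z$-independent while $\uv$ has zero vertical mean, a short direct computation gives $Q(\um\cdot\nabla\um)=\um\cdot\nabla\um$, $Q(\um\cdot\nabla\uv)=Q(\uv\cdot\nabla\um)=0$, and $Q\nabla p=\nabla\pmm$, producing
\begin{equation*}
\partial_t \uv + \um\cdot\nabla\uv + \uv\cdot\nabla\um + (1-Q)(\uv\cdot\nabla\uv) = \Delta\uv - \nabla\pv, \qquad \di\uv = 0,
\end{equation*}
with $\pv=(1-Q)p$.

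For (i), I would test this equation against $\uv$ in $L^2(\Omega)$. The pressure term vanishes by $\di\uv=0$; the classical divergence-free cancellations kill $\int_\Omega\uv\cdot(\um\cdot\nabla)\uv$ and $\int_\Omega\uv\cdot(\uv\cdot\nabla)\uv$; and $\int_\Omega\uv\cdot Q(\uv\cdot\nabla\uv)=0$ because $\uv$ has zero vertical mean while $Q(\cdot)$ is $z$-independent. Only the cubic $-\int_\Omega\uv\cdot(\uv\cdot\nabla)\um$ survives, bounded via the helical Ladyzhenskaya inequality (Lemma \ref{lemmaLadyzhen}) and Young by $\tfrac12\|\nabla\uv\|_{L^2}^2+\tfrac{C_0}{2L}\|\nabla\um\|_{L^2}^2\|\uv\|_{L^2}^2$. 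This yields $\partial_t\|\uv\|_{L^2}^2+\|\nabla\uv\|_{L^2}^2\leqslant k\ver\|\uv\|_{L^2}^2$; splitting the dissipation in two and applying Poincaré's inequality $\|\uv\|_{L^2}^2\leqslant L^2\|\nabla\uv\|_{L^2}^2$ (Lemma \ref{ThPoincare}) on one half produces the exponential decay, and Grönwall with integrating factor $\exp(t/L^2-\int_0^t k\ver)$ delivers \eqref{eqEstimateuv}.

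For (ii) and (iii) the scheme is the same, with test functions $-\Delta\uv$ and $\Delta^2\uv$ respectively (the latter being equivalent to applying $-\Delta$ to the equation and testing against $-\Delta\uv$). The linear parts give $\tfrac12\partial_t\|\nabla\uv\|_{L^2}^2+\|\Delta\uv\|_{L^2}^2$ in (ii) and $\tfrac12\partial_t\|\Delta\uv\|_{L^2}^2+\|\nabla\Delta\uv\|_{L^2}^2$ in (iii); the pressure terms vanish since $\di\Delta\uv=0$, and the $Q$-part of the nonlinearity again drops because $\Delta\uv$ still has zero vertical mean. I would then regroup the cubic terms via $\um=u-\uv$ so that only integrals against $(u\cdot\nabla)\uv$ and $(\uv\cdot\nabla)u$ remain. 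One integration by parts using $\di u=0$ turns each into a trilinear integral of three first-order derivatives, controlled via the helical Ladyzhenskaya inequality applied to $\nabla\uv$, namely $\|\nabla\uv\|_{L^4}^2\leqslant(C_0/L)^{1/2}\|\nabla\uv\|_{L^2}\|\Delta\uv\|_{L^2}$; Young's inequality absorbs $\tfrac12\|\Delta\uv\|_{L^2}^2$ into the dissipation and leaves $\partial_t\|\nabla\uv\|_{L^2}^2+\|\Delta\uv\|_{L^2}^2\leqslant K\ver(t)\|\nabla\uv\|_{L^2}^2$. Since $\nabla\uv$ also has zero vertical mean, the Poincaré--Grönwall argument of (i) closes \eqref{eqEstimateGraduv}, and the bound $\int_0^t K\ver\leqslant C\ln(1+t)$ provided by Corollary \ref{corCorMaekawaIneq} gives the $e^{C\ln(1+t)}$ factor. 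Part (iii) runs in the same way one order higher, with starting time $t=1$ used so that $\Delta\uv(1)\in L^2$ via Proposition \ref{ThLocalExistence} together with estimate (ii) at $t=1$.

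The main obstacle is the careful organisation of the cubic terms in (ii)--(iii) so that only $\|\nabla u\|_{L^2}^2$ --- the largest norm on $u$ that Corollary \ref{corCorMaekawaIneq} proves to be log-integrable in time --- ever appears in the exponents $K\ver$ and $\mathcal{K}\ver$. The rewriting $\uv\cdot\nabla\um=\uv\cdot\nabla u-\uv\cdot\nabla\uv$ is the essential trick, as it avoids the unfriendly norm $\|\nabla^2\um\|_{L^4}$ that a naïve estimate would generate and preserves the time integrability required to obtain an exponentially-decaying bound.
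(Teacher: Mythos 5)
Your reduction to the equation for $\uv$ and your treatment of part (i) coincide with the paper's proof step for step (same cancellations, same Ladyzhenskaya--Young bound on $\int_\Omega\uv\cdot(\uv\cdot\nabla)\um$, same splitting of the dissipation with Poincar\'e, same integrating factor). The genuine gap is in part (ii), and it propagates to (iii). First, a small slip: with your regrouping the nonlinearity is $u\cdot\nabla\uv+\uv\cdot\nabla u-\uv\cdot\nabla\uv-Q(\uv\cdot\nabla\uv)$, not just the first two terms (the extra $\uv\cdot\nabla\uv$ is harmless, being of the type $\|\nabla\uv\|_{L^4}^2\|\nabla\uv\|_{L^2}$). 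The serious problem is your claim that one integration by parts using $\di u=0$ turns \emph{each} remaining term into a trilinear integral of three first-order derivatives. This is true for $\int_\Omega\Delta\uv\cdot(u\cdot\nabla)\uv$, where $-\int\partial_k\uv_j\,u_i\,\partial_i\partial_k\uv_j=-\tfrac12\int u_i\partial_i|\nabla\uv|^2=0$, but false for $\int_\Omega\Delta\uv\cdot(\uv\cdot\nabla)u$: there one gets $-\int\partial_k\uv_j\,\partial_k\uv_i\,\partial_i u_j-\int\partial_k\uv_j\,\uv_i\,\partial_i\partial_k u_j$, and the transport cancellation is unavailable because the differentiated slot is $u$, not $\uv$. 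A derivative count shows no rearrangement helps: three derivatives over three factors, and the undifferentiated factor $\uv$ can absorb at most one by parts, so some factor must carry two derivatives. Estimating it then forces either $\|\nabla u\|_{L^4}\lesssim\|\nabla u\|_{L^2}^{1/2}\|\Delta u\|_{L^2}^{1/2}$ or a direct $\|\nabla^2 u\|_{L^2}$, i.e.\ the norm $\|\Delta u\|_{L^2}$, which is \emph{not} controlled pointwise in time by anything log-integrable --- by \eqref{eqMaekawaIneqCorCorDeltau} only $\int_0^t\|\Delta u\|_{L^2}^2\leqslant C(1+t)^C$ is known. Consequently the clean Gr\"onwall inequality $\partial_t\|\nabla\uv\|_{L^2}^2+\|\Delta\uv\|_{L^2}^2\leqslant K\ver(t)\|\nabla\uv\|_{L^2}^2$ you announce cannot be reached by the stated manipulations, and your assertion that the rewriting $\uv\cdot\nabla\um=\uv\cdot\nabla u-\uv\cdot\nabla\uv$ ``avoids'' all second-order norms of $\um$ is mistaken: it merely relocates them.

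What the paper does at precisely this point is the substantive content of its proof: it keeps the problematic term as a \emph{source} rather than a Gr\"onwall coefficient. Cauchy--Schwarz, Ladyzhenskaya and Young give $\bigl|\int_\Omega\Delta\uv\cdot(\uv\cdot\nabla)\um\bigr|\leqslant\tfrac18\|\Delta\uv\|_{L^2}^2+\tfrac{2C_0}{L}K\ver_1$ with $K\ver_1=\|\nabla\uv\|_{L^2}\|\uv\|_{L^2}\|\Delta\um\|_{L^2}\|\nabla\um\|_{L^2}$, so the differential inequality carries a forcing term, and after multiplying by the integrating factor one must show that the Duhamel integral $\int_0^te^{-\frac{t-s}{L^2}+\int_s^tK\ver}K\ver_1(s)\,ds$ still decays like $(1+t)^Ce^{-t/L^2}$. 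This is where part (i) is actually \emph{used}: splitting $K\ver_1\leqslant\tfrac12\|\nabla\uv\|_{L^2}^2\|\nabla\um\|_{L^2}^2+\tfrac12\|\Delta\um\|_{L^2}^2\|\uv\|_{L^2}^2$, the first piece is absorbed by the weighted dissipation integral in \eqref{eqEstimateuv}, and the second combines the exponential decay of $\|\uv(s)\|_{L^2}^2$ with $\int_0^t\|\Delta\um\|_{L^2}^2\leqslant C(1+t)^C$ from \eqref{eqMaekawaIneqCorCorDeltaum}. Your proposal elides this entire source-term/Duhamel step, which is the heart of (ii) and, one order higher, of (iii). Your remaining points --- the vanishing of the $Q$-contributions against $\Delta\uv$, the Poincar\'e step applied to $\nabla\uv$, the bound $e^{\int_0^tK\ver}\leqslant C(1+t)^C$ via \eqref{eqMajorationK}, and starting (iii) at $t=1$ so that $\Delta\uv(1)\in L^2$ --- are sound and agree with the paper.
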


\refstepcounter{theorem}
\paragraph{Remark \thetheorem.}\label{rkMajorationkKK}
Given that $Q$ is an orthogonal projection, we get that $\|\nabla u\|_{L^2}^2 = \|\nabla\um\|_{L^2}^2 + \|\nabla\uv\|_{L^2}^2$ and $\|\Delta u\|_{L^2}^2 = \|\Delta\um\|_{L^2}^2 + \|\Delta\uv\|_{L^2}^2$ so estimates \eqref{eqMaekawaIneqCorCorGradu} and \eqref{eqMaekawaIneqCorCorDeltau} from corollary \ref{corCorMaekawaIneq} hold as well for $\um$ and $\uv$.
In particular, for all $t\geqslant0$ we have
\begin{equation}\label{eqMajorationK}
\int_0^t k\ver(s)\,ds + \int_0^t K\ver(s)\,ds + \int_0^t \mathcal{K}\ver(s)\,ds \leqslant C(1+\ln(1+t))
\end{equation} 
and 
\begin{equation}\label{eqMaekawaIneqCorCorDeltaum}
\|\nabla\um(t)\|_{L^2}^2 + \frac{1}{2}\int_0^t \|\Delta \um(s)\|_{L^2}^2\,ds 
\leqslant C (1+t)^C
\end{equation} 
where $C>0$ depends on $L$, $a$ and $u_0$.

\begin{proof}
Given that $(1-Q)\um = 0$ and $(1-Q)\uv = \uv$, by applying $1-Q$ to \eqref{eqNSdimensionless} we get that $\uv$ satisfies the equation
\begin{equation}\label{equv}
\partial_t\uv + (1-Q)(\uv\cdot\nabla\uv) + \um\cdot\nabla\uv + \uv\cdot\nabla\um = \Delta\uv - \nabla \pv
\end{equation}
where we note $\pv = (1-Q)p = p(t) - \frac{1}{2\pi L}\int_0^{2\pi L} p(t,\cdot,\cdot,z)\,dz$.
The calculations that lead to estimates \eqref{eqEstimateuv} to \eqref{eqEstimateLaplaceuv} are similar to the ones leading to \eqref{eqEstimatev}. Let us consider in $(L^2(\Omega))^3$ the scalar product of \eqref{equv} against $\uv$, where
\[
\int_\Omega \uv(t)\cdot\partial_t\uv(t) = \int_\Omega \partial_t\left(\frac{1}{2}\uv(t)^2\right) = \frac{1}{2}\partial_t\|\uv(t)\|_{L^2}^2
\quad\text{ and }\quad
\int_\Omega\uv(t)\cdot\Delta\uv(t) = -\|\nabla\uv(t)\|_{L^2}^2
\]
by doing an integration by parts to get the right-hand-side equality, where
\[
\int_\Omega\uv(t)\cdot\nabla\pv(t) = 0 
\quad\text{ and }\quad
\int_\Omega\uv(t)\cdot(\um(t)\cdot\nabla)\uv(t) = 0
\]
by integrating by parts and because $\di(\uv(t)) = \di(\um(t)) = 0$, and where
\[
\int_\Omega\uv(t)\cdot(1-Q)(\uv(t)\cdot\nabla\uv(t))
= \int_\Omega\uv(t)\cdot(\uv(t)\cdot\nabla\uv(t))
= 0
\]
given that $\int_\Omega\uv(t)\cdot Q(\uv(t)\cdot\nabla\uv(t)) = 0$ and because  $\di(\uv(t)) = 0$. 
To treat the last term, we use first the Cauchy-Schwarz inequality then Ladyzhenskaya's inequality (lemma \ref{lemmaLadyzhen}) then Young's inequality to write
\begin{multline*}
\left|\int_\Omega\uv(t)\cdot(\uv(t)\cdot\nabla)\um(t)\right| 
\leqslant \|\uv(t)\|_{L^4}^2 \|\nabla\um(t)\|_{L^2}
\\
\leqslant \left(\!\frac{C_0}{L}\!\right)^{\frac{1}{2}} \|\nabla\uv(t)\|_{L^2}\|\uv(t)\|_{L^2} \|\nabla\um(t)\|_{L^2}
\leqslant \frac{1}{4}\|\nabla\uv(t)\|_{L^2}^2 + \frac{C_0}{L}\,\|\nabla\um(t)\|_{L^2}^2 \|\uv(t)\|_{L^2}^2,
\end{multline*}
which gives for every $s>0$
\begin{equation}
\frac{1}{2}\partial_s\|\uv(s)\|_{L^2}^2 \leqslant -\frac{3}{4}\|\nabla\uv(s)\|_{L^2}^2 + \frac{C_0}{L}\,\|\nabla\um(s)\|_{L^2}^2 \|\uv(s)\|_{L^2}^2.
\end{equation}
Using Poincaré's inequality to write 
\begin{equation}
\partial_s\|\uv(s)\|_{L^2}^2 \leqslant -\frac{1}{2}\|\nabla\uv(s)\|_{L^2}^2  - \frac{1}{L^2}\|\uv(s)\|_{L^2}^2 + k\ver(s)\, \|\uv(s)\|_{L^2}^2,
\end{equation}
multiplying then this whole inequality by $\exp(\int_0^s\frac{1}{L^2} - k\ver(\sigma)\,d\sigma)$ and integrating it for $s$ between 0 and $t>0$ leads to estimate \eqref{eqEstimateuv}.

We will now show estimate \eqref{eqEstimateGraduv} with the same approach.
Let us consider in $(L^2(\Omega))^3$ the scalar product of \eqref{equv} against $\Delta\uv$. 
Let us see that $\int_\Omega \Delta\uv(t)\cdot\partial_t\uv(t) = -\frac{1}{2}\partial_t\|\nabla\uv(t)\|_{L^2}^2$ and that $\int_\Omega\Delta\uv(t)\cdot\nabla\pv(t) = 0$ by integrating by parts and for the latter equality because $\mbox{$\di(\Delta\uv(t)) = 0$}$.
Given that $\di\um = 0$, we get thanks to the Cauchy-Schwaz inequality then Ladyzhenskaya's inequality then Young's inequality that
\begin{multline*}
\left|\int_\Omega\Delta\uv(t)\cdot(\um(t)\cdot\nabla)\uv(t)\right| 
\leqslant \|\nabla\uv(t)\|_{L^4}^2 \|\nabla\um(t)\|_{L^2}
\\
\leqslant \left(\!\frac{C_0}{L}\!\right)^{\frac{1}{2}} \|\Delta\uv(t)\|_{L^2}\|\nabla\uv(t)\|_{L^2} \|\nabla\um(t)\|_{L^2}
\leqslant \frac{1}{16}\|\Delta\uv(t)\|_{L^2}^2 + \frac{4C_0}{L}\,\|\nabla\um(t)\|_{L^2}^2 \|\nabla\uv(t)\|_{L^2}^2.
\end{multline*}
Because $\int_\Omega\Delta\uv(t)\cdot Q(\uv(t)\cdot\nabla\uv(t)) = 0$ and given that $\di(\uv(t)) = 0$, we have in the same way 
\begin{multline*}
\left|\int_\Omega\Delta\uv(t)\cdot(1-Q)(\uv(t)\cdot\nabla\uv(t))\right| 
\leqslant \|\nabla\uv(t)\|_{L^4}^2 \|\nabla\uv(t)\|_{L^2}
\\
\leqslant \left(\!\frac{C_0}{L}\!\right)^{\frac{1}{2}} \|\Delta\uv(t)\|_{L^2}\|\nabla\uv(t)\|_{L^2}^2
\leqslant \frac{1}{16}\|\Delta\uv(t)\|_{L^2}^2 + \frac{4C_0}{L}\,\|\nabla\uv(t)\|_{L^2}^4.
\end{multline*}
The same reasoning gives finally
\begin{multline*}
\left|\int_\Omega\Delta\uv(t)\cdot(\uv(t)\cdot\nabla)\um(t)\right| 
\leqslant \|\Delta\uv(t)\|_{L^2} \|\uv(t)\|_{L^4} \|\nabla\um(t)\|_{L^4}
\\
\leqslant \frac{1}{8}\|\Delta\uv(t)\|_{L^2}^2 + \frac{2C_0}{L}\,\|\nabla\uv(t)\|_{L^2}\|\uv(t)\|_{L^2} \|\Delta\um(t)\|_{L^2}\|\nabla\um(t)\|_{L^2}.
\end{multline*}

All of the above calculation give the following inequality for every $s>0$
\begin{equation}
\frac{1}{2}\partial_s\|\nabla\uv(s)\|_{L^2}^2 + \frac{3}{4}\|\Delta\uv(s)\|_{L^2}^2 
\leqslant \frac{1}{2}K\ver(s)\|\nabla\uv(s)\|_{L^2}^2 + \frac{2C_0}{L}K\ver_1(s)
\end{equation}
where $K\ver_1(s) = \|\nabla\uv(s)\|_{L^2}\|\uv(s)\|_{L^2} \|\Delta\um(s)\|_{L^2}\|\nabla\um(s)\|_{L^2}$.
Using Poincaré's inequality on $\nabla\uv$ to write 
\begin{equation}
\partial_s\|\nabla\uv(s)\|_{L^2}^2 + \frac{1}{2}\|\Delta\uv(s)\|_{L^2}^2 + \frac{1}{L^2}\|\nabla\uv(s)\|_{L^2}^2
\leqslant K\ver(s)\|\nabla\uv(s)\|_{L^2}^2 + \frac{4C_0}{L}K\ver_1(s),
\end{equation}
multiplying then this whole inequality by $\exp(\int_0^s\frac{1}{L^2} - K\ver(\sigma)\,d\sigma)$ and integrating it for $s$ between 0 and $t>0$ leads to the relation
\begin{multline}\label{local2}
\|\nabla\uv(t)\|_{L^2}^2 + \frac{1}{2} \int_0^t e^{-\frac{t-s}{L^2} + \int_s^tK\ver} \|\Delta\uv(s)\|_{L^2}^2\,ds \\
\leqslant \|\nabla\uv_0\|_{L^2}^2 e^{-\frac{t}{L^2} + \int_0^tK\ver} + \int_0^t e^{-\frac{t-s}{L^2} + \int_s^tK\ver}\, \frac{4C_0}{L}K\ver_1(s)\,ds.
\end{multline}
Let us focus on the right-hand side of \eqref{local2}. Given \eqref{eqMajorationK} we have that
\(
e^{\int_0^tK\ver} 
\leqslant C_1(1+t)^{C_1}
\)
for some constant $C_1 = C_1(L,a,u_0)>0$.
Now let us prove that
\(
\int_0^t e^{-\frac{t-s}{L^2} + \int_s^tK\ver}\, K\ver_1(s)\,ds
\leqslant C_2(1+t)^{C_2}e^{-\frac{t}{L^2}}
\)
for some constant $C_2 = C_2(L,a,u_0)>0$. This is a consequence of estimate \eqref{eqEstimateuv} and Remark \ref{rkMajorationkKK}, and arises from writing $K\ver_1 \leqslant \frac{1}{2} \|\nabla\uv\|_{L^2}^2\|\nabla\um\|_{L^2}^2 + \frac{1}{2} \|\Delta\um\|_{L^2}^2\|\uv\|_{L^2}^2$
then 
\begin{multline*}
\int_0^t e^{-\frac{t-s}{L^2} + \int_s^tK\ver}\, \|\nabla\uv(s)\|_{L^2}^2\|\nabla\um(s)\|_{L^2}^2\,ds
\leqslant \int_0^t e^{-\frac{t-s}{L^2} + \int_0^tK\ver}\, \|\nabla\uv(s)\|_{L^2}^2\, C_3(1+s)^{C_3} \,ds
\\
\leqslant C_3(1+t)^{C_3} e^{\int_0^tK\ver} \int_0^t e^{-\frac{t-s}{L^2}}\, \|\nabla\uv(s)\|_{L^2}^2 \,ds
\leqslant C_4(1+t)^{C_4}\  C_5\,e^{-\frac{t}{L^2} + \int_0^tk\ver}
\leqslant C_6(1+t)^{C_6}e^{-\frac{t}{L^2}}
\end{multline*}
and finally 
\begin{multline*}
\int_0^t e^{-\frac{t-s}{L^2} + \int_s^tK\ver}\, \|\Delta\um(s)\|_{L^2}^2\|\uv(s)\|_{L^2}^2\,ds
\leqslant \int_0^t e^{-\frac{t-s}{L^2} + \int_0^tK\ver}\, \|\Delta\um(s)\|_{L^2}^2\, C_7\,e^{-\frac{s}{L^2} + \int_0^sk\ver} \,ds
\\
\leqslant C_7\,e^{-\frac{t}{L^2} + \int_0^tK\ver + \int_0^tk\ver} \int_0^t \|\Delta\um(s)\|_{L^2}^2 \,ds
\leqslant C_8(1+t)^{C_8}e^{-\frac{t}{L^2}}\  C_9(1+t)^{C_9} 
\leqslant C_{10}(1+t)^{C_{10}}e^{-\frac{t}{L^2}}
\end{multline*}
for some postive constants $C_3$ to $C_{10}$ which depend on $L$, $a$ and $u_0$.
Inequality \eqref{local2} combined with these last calculations gives estimate \eqref{eqEstimateGraduv}.

Estimate \eqref{eqEstimateLaplaceuv} is obtained in the same way, by writing the scalar product of \eqref{equv} against the bilaplacian $\Delta^2\uv$.
The details are left to the reader.
\end{proof}

\section{Asymptotic Behaviour}
\label{SectionAsymptotics}

Knowing from the previous section that the zero-vertical-mean component $\uv$ of $u$ and its derivatives decay exponentially in $(L^2(\Omega))^3$ as $t \to +\infty$, we want now to show that $\|\um(t) - a\ulo(t)\|_{L^2} \to 0$ and $\|\nabla\um(t) - a\nabla\ulo(t)\|_{L^2} = o(t^{-\frac{1}{2}})$ when $t\to+\infty$.

To do so, let us write the equation satisfied by $\vm = \um - a\ulo$. We first apply $Q$ to \eqref{eqNSdimensionless} and get 
\[
\partial_t\um + Q(u\cdot\nabla u) = \Delta\um - \nabla\pmm,
\]
where we can subtract the quantity $a\partial_t\ulo = a\Delta\ulo$ from both sides of the equation and where
\[
Q(u\cdot\nabla u) = Q(\um\cdot\nabla\um) + Q(\um\cdot\nabla\uv) + Q(\uv\cdot\nabla\um) + Q(\uv\cdot\nabla\uv) = \um\cdot\nabla\um + Q(\uv\cdot\nabla\uv),
\]
which leads to
\[
\partial_t\vm + \um\cdot\nabla\um + Q(\uv\cdot\nabla\uv) = \Delta\vm - \nabla\pmm.
\]
Let us see here that $\um\cdot\nabla\um$ is a gradient. Because $u$ is helical, $\um$ is a radial two-and-a-half dimensional flow $\um(t,r,\theta,z) = \um_r(t,r)e_r + \um_\theta(t,r)e_\theta + \um_z(t,r)e_z$; and given that $\di \um = \frac{1}{r}\partial(r\,\um_r(r)) = 0$ we know that $\um_r = 0$. Hence 
\[
\um\cdot\nabla\um = \frac{1}{r}\um_\theta\,\partial_\theta\um = -\frac{1}{r}\,\um_\theta^2\,e_r = - \nabla\left(\int_1^r \frac{1}{\rho}\um_\theta(t,\rho)^2\,d\rho\right)
\]
and so
\[
\partial_t\vm + Q(\uv\cdot\nabla\uv) = \Delta\vm - \nabla\left( \pmm - \int_1^r \frac{1}{\rho}\um_\theta(t,\rho)^2\,d\rho\right).
\]

Taking the Leray projection $P$ of that last equation, we see that $\vm$ satisfies the heat equation with source term
\begin{equation}\label{eqvm}
\partial_t\vm + \nm = \Delta\vm
\end{equation}
where 
\begin{equation}\label{defN}
\nm = PQ(\uv\cdot\nabla\uv) = PQ(\vv\cdot\nabla\vv)
\end{equation}
decays exponentially in time according to Proposition \ref{lemmaEstimatesuv}.
Indeed, because $P$ and $Q$ are both orthogonal projections one can write $\|\nm\|_{L^2} = \|\uv\cdot\nabla\uv\|_{L^2}$, then $\|\uv\cdot\nabla\uv\|_{L^2} \leqslant \|\uv\|_{L^4} \|\nabla\uv\|_{L^4}$ by Hölder's inequality and $\|\uv\|_{L^4} \|\nabla\uv\|_{L^4} \leqslant (\frac{C_0}{L})^{\frac{1}{2}} \|\uv\|_{L^2}^{\frac{1}{2}} \|\nabla\uv\|_{L^2} \|\Delta\uv\|_{L^2}^{\frac{1}{2}}$ by Ladyzhenskaya's inequality (Lemma \ref{lemmaLadyzhen}). Proposition \ref{lemmaEstimatesuv} therefore ensures that there exist two positive constants $C>0$ and $C'>0$ (depending on $L$ , $a$ and $v_0$) such that 
\begin{equation}\label{eqNdecroitExponentiellementL2}
\|\nm(t)\|_{L^2} \leqslant C'e^{-Ct}
\qquad \forall t\geqslant1.
\end{equation}

Let us now remind briefly some properties of the heat semigroup in $L^2$, then we shall complete the proof of Theorem \ref{ThIntro}.

\begin{lemma}[Heat semigroup properties]\label{lemmaSemigp}
Let $(S(t))_{t\geqslant0}$ be the heat semigroup on $\Omega$. Let $\alpha\in\NN^3$. There exists a constant $C>0$ such that 
\[
\|\partial_\alpha S(t)v\|_{L^2} \leqslant C\,t^{-\frac{|\alpha|}{2}}\|v\|_{L^2}
\]
for any $t>0$ and any vector field $v\in(L^2(\Omega))^3$.
Moreover, for any vector field $v\in(L^2(\Omega))^3$ one has $t^{\frac{|\alpha|}{2}}\,\|\partial_\alpha S(t)v\|_{L^2} \longrightarrow0$ when $t\to+\infty$.
\end{lemma}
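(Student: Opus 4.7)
The plan is to work in Fourier variables and exploit the product structure of the domain $\Omega = \RR^2\times(\RR/2\pi L\ZZ)$. Combining the Fourier transform in the horizontal directions (continuous variable $\xi\in\RR^2$) with a Fourier series in the vertical direction (discrete variable $k\in\ZZ$), the heat semigroup diagonalises as multiplication by $e^{-t(|\xi|^2+k^2/L^2)}$, and any Cartesian derivative $\partial_\alpha$ with multi-index $\alpha=(\alpha_H,\alpha_3)\in\NN^2\times\NN$ becomes multiplication by $(i\xi)^{\alpha_H}(ik/L)^{\alpha_3}$. Parseval's identity then gives
\[
\|\partial_\alpha S(t)v\|_{L^2}^2
= \sum_{k\in\ZZ}\int_{\RR^2} \xi^{2\alpha_H}(k/L)^{2\alpha_3}\, e^{-2t(|\xi|^2+k^2/L^2)}\,|\hat v(\xi,k)|^2\,d\xi .
\]

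For the smoothing bound, I would combine the elementary estimate $\xi^{2\alpha_H}(k/L)^{2\alpha_3}\leqslant (|\xi|^2+k^2/L^2)^{|\alpha|}$ with the scalar inequality $s^{|\alpha|}e^{-2ts}\leqslant (|\alpha|/(2e))^{|\alpha|}t^{-|\alpha|}$ (valid for $s\geqslant 0$ and $t>0$, as the maximum of $s\mapsto s^{|\alpha|}e^{-2ts}$ is attained at $s=|\alpha|/(2t)$), applied to $s=|\xi|^2+k^2/L^2$. The resulting pointwise bound $C_\alpha\,t^{-|\alpha|}$ can be pulled outside the integral, and the Parseval identity $\|v\|_{L^2}^2 = \sum_k\int_{\RR^2}|\hat v(\xi,k)|^2\,d\xi$ then yields the first estimate of the lemma with $C=C_\alpha^{1/2}$.

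For the long-time decay I would apply Lebesgue's dominated convergence theorem. The integrand
\[
t^{|\alpha|}\,\xi^{2\alpha_H}(k/L)^{2\alpha_3}\, e^{-2t(|\xi|^2+k^2/L^2)}\,|\hat v(\xi,k)|^2
\]
is dominated for all $t>0$ by $C_\alpha\,|\hat v(\xi,k)|^2\in L^1(\RR^2\times\ZZ)$ thanks to the previous scalar inequality (the dominating function being integrable since $v\in L^2$), while for every fixed $(\xi,k)\neq(0,0)$ it tends to $0$ as $t\to+\infty$ because exponential decay beats polynomial growth. Since the singleton $(0,0)$ has $d\xi\otimes(\text{counting})$-measure zero in $\RR^2\times\ZZ$, dominated convergence gives $t^{|\alpha|}\|\partial_\alpha S(t)v\|_{L^2}^2\to 0$. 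The argument is essentially routine once the Fourier picture is set up; the only subtle point I expect is bookkeeping for the product-measure dominated convergence that mixes the continuous horizontal and the discrete vertical modes into a single integration step.
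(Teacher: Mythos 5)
Your proof is correct, and there is nothing to compare it against in detail: the paper states Lemma \ref{lemmaSemigp} without proof, treating it as a standard property of the heat semigroup on $\Omega = \RR^2\times\RR/2\pi L\ZZ$. Your argument --- Plancherel with a mixed continuous/discrete Fourier variable, the elementary bound $s^{|\alpha|}e^{-2ts}\leqslant (|\alpha|/(2e))^{|\alpha|}t^{-|\alpha|}$, and dominated convergence using that $\{|\xi|^2+k^2/L^2=0\}$ is a null set for $d\xi\otimes(\text{counting measure})$ --- is exactly the canonical proof the paper implicitly relies on, and it correctly covers both the quantitative smoothing estimate and the rateless limit $t^{|\alpha|/2}\|\partial_\alpha S(t)v\|_{L^2}\to 0$ (including the case $|\alpha|=0$, where no uniform decay rate is available for $L^2$ data and the dominated-convergence step is genuinely needed).
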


\begin{proposition}[Time-asymptotic behaviour]\label{ThAsymptoticOseen}
For every $v_0 \in H$, the component $\vm = Qv$ of the solution $v$ of \eqref{eqv} with initial data $v_0$ given by Theorem \ref{ThGlobal} satisfies $\|\vm(t)\|_{L^2} + \sqrt{t}\,\|\nabla\vm(t)\|_{L^2} \to 0$ when $t$ tends to infinity.
\end{proposition}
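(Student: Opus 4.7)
The plan is to treat \eqref{eqvm} as a linear heat equation for the divergence-free field $\vm$ whose source $\nm$ decays exponentially in $L^2$ by \eqref{eqNdecroitExponentiellementL2}, and to extract the asymptotic behaviour from Duhamel's formula combined with the smoothing/decay properties of Lemma \ref{lemmaSemigp}. Since both $\vm$ and $\nm$ are divergence-free, the Leray projection plays no further role and one may work directly with the heat semigroup $(S(t))_{t\geqslant 0}$.

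The central idea is to base Duhamel's formula not at $t=0$ but at an arbitrary late time $T_0\geqslant 1$, to be sent to infinity at the end of the argument: for $t\geqslant T_0$,
\[
\vm(t) \;=\; S(t-T_0)\vm(T_0) \;-\; \int_{T_0}^t S(t-s)\nm(s)\,ds.
\]
The $L^2$-convergence follows at once from the contraction of $S$ on $L^2$ and from \eqref{eqNdecroitExponentiellementL2}: one gets $\|\vm(t)\|_{L^2}\leqslant \|S(t-T_0)\vm(T_0)\|_{L^2}+\int_{T_0}^\infty\|\nm(s)\|_{L^2}\,ds$, whose first term tends to $0$ as $t\to\infty$ by Lemma \ref{lemmaSemigp} and whose second is of order $e^{-CT_0}$; sending $T_0\to\infty$ then gives $\|\vm(t)\|_{L^2}\to 0$.

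For $\sqrt{t}\,\|\nabla\vm(t)\|_{L^2}\to 0$ I would apply $\nabla$ to the same Duhamel formula. The initial contribution $\sqrt{t}\,\|\nabla S(t-T_0)\vm(T_0)\|_{L^2}$ vanishes as $t\to\infty$ by the ``little-o'' part of Lemma \ref{lemmaSemigp} (together with $\sqrt{t}/\sqrt{t-T_0}\to 1$). For the convolution I would use $\|\nabla S(\tau)f\|_{L^2}\leqslant C\tau^{-1/2}\|f\|_{L^2}$ and, for $t>2T_0$, split the integration at $s=t/2$: on $[T_0,t/2]$ the ratio $\sqrt{t}/\sqrt{t-s}$ is bounded by $\sqrt{2}$, so that piece is dominated by $\sqrt{2}C\int_{T_0}^\infty\|\nm(s)\|_{L^2}\,ds$, which is small for $T_0$ large; on $[t/2,t]$ the exponential decay gives $\|\nm(s)\|_{L^2}\leqslant C'e^{-Ct/2}$ while $\int_{t/2}^t(t-s)^{-1/2}\,ds=\sqrt{2t}$, yielding an $\mathcal{O}(t\,e^{-Ct/2})$ contribution that vanishes. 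Letting $T_0\to\infty$ closes the argument.

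The main obstacle is precisely the $\sqrt{t}$ prefactor: a Duhamel estimate with a fixed base point (say $T_0=1$) produces only boundedness, because $\sqrt{t}/\sqrt{t-s}\to 1$ for $s$ bounded makes the convolution converge to a strictly positive limit in general. The remedy is to push the base point $T_0$ arbitrarily far into the future, so that this troublesome factor then multiplies only the exponentially small tail $\int_{T_0}^\infty\|\nm(s)\|_{L^2}\,ds$.
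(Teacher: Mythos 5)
Your proposal is correct and takes essentially the same route as the paper's own proof: the same Duhamel formula \eqref{eqvmIntegral} rebased at a late time $t_0\geqslant 1$, the same splitting of the source integral at $s=t/2$ yielding a tail of size $\mathcal{O}(e^{-Ct_0})$ on $[t_0,t/2]$ and an $\mathcal{O}\bigl(t\,e^{-Ct/2}\bigr)$ contribution on $[t/2,t]$ via \eqref{eqNdecroitExponentiellementL2} and Lemma \ref{lemmaSemigp}, followed by a $\limsup$ in $t$ and sending the base point to infinity. The only cosmetic difference is that the paper treats $\|\vm(t)\|_{L^2}$ and $\sqrt{t}\,\|\nabla\vm(t)\|_{L^2}$ uniformly through multi-indices $\alpha$ with $|\alpha|\leqslant 1$, whereas you handle the two norms separately.
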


\begin{proof}
Given \eqref{eqvm}, $\vm$ satisfies the integral equation
\begin{equation}\label{eqvmIntegral}
\vm(t) = S(t)\vm_0 - \int_0^tS(t-s)\nm(s)\,ds
\qquad\forall t\geqslant 0
\end{equation}
where $(S(t))_{t\geqslant0}$ denotes the heat semigroup in $\Omega$.
Because $\partial_z\vm = 0$, to prove the theorem we need to show that $t^{\frac{|\alpha|}{2}}\|\partial_\alpha\vm(t)\|_{L^2} \to 0$ for $\alpha$ being the multi-index $(0,0,0)$, $(1,0,0)$ or $(0,1,0)$ in Cartesian coordinates.

Fist of all, let us consider $t_0\geqslant1$ and $t\geqslant 2t_0$ and let us rewrite \eqref{eqvmIntegral} as
\[
\vm(t) = S(t-t_0)\vm(t_0) - \int_{t_0}^{t/2}S(t-s)\nm(s)\,ds - \int_{t/2}^tS(t-s)\nm(s)\,ds.
\]
Let us then treat each term separately.

The fact that $t^{\frac{|\alpha|}{2}}\|\partial_\alpha S(t-t_0)\vm(t_0)\|_{L^2} \to 0$ when $t\to+\infty$ is direct from Lemma \ref{lemmaSemigp}. 
On the other hand, given Lemma \ref{lemmaSemigp} and \eqref{eqNdecroitExponentiellementL2} we can write that
\begin{multline*}
\int_{t_0}^{t/2} t^{\frac{|\alpha|}{2}}\|\partial_\alpha S(t-s)\nm(s)\|_{L^2} \,ds 
\leqslant \int_{t_0}^{t/2} t^{\frac{|\alpha|}{2}} (t-s)^{-\frac{|\alpha|}{2}} \|\nm(s)\|_{L^2} \,ds 
\\ 
\leqslant 2^{\frac{|\alpha|}{2}}\int_{t_0}^{t/2}  C'e^{-Cs} \,ds 
\leqslant 2^{\frac{|\alpha|}{2}} \frac{C'}{C} e^{-Ct_0}
\end{multline*}
and that 
\begin{multline*}
\int_{t/2}^t t^{\frac{|\alpha|}{2}}\|\partial_\alpha S(t-s)\nm(s)\|_{L^2} \,ds 
\leqslant \int_{t/2}^t t^{\frac{|\alpha|}{2}} (t-s)^{-\frac{|\alpha|}{2}} C'e^{-Cs} \,ds 
\\
\leqslant C't \left(\int_{1/2}^1 s^{\frac{|\alpha|}{2}}(1-s)^{-\frac{|\alpha|}{2}} \,ds\right) e^{-Ct/2}
\leqslant \sqrt{2}\,C't\,e^{-Ct/2} \xrightarrow[t\to+\infty]{} 0,
\end{multline*}
where $C$ and $C'$ are the constants from \eqref{eqNdecroitExponentiellementL2}.
We have therefore, for all $t_0\geqslant 1$
\[
\underset{t\to+\infty}\limsup\ t^{\frac{|\alpha|}{2}}\|\partial_\alpha\vm(t)\|_{L^2} \leqslant 2^{\frac{|\alpha|}{2}} \frac{C'}{C} e^{-Ct_0}
\xrightarrow[t_0\to+\infty]{} 0,
\]
which ends the proof.
\end{proof}

\section{Additional Results and Comments}\label{SectionConclusion}

The Navier-Stokes equations \eqref{eqNSdimensionless} can also be studied from the point of view of the vorticity. This alternative approach has a number of advantages, and several results have a clearer formulation or a simpler proof when formulated in terms of the vorticity, see for example \cite{GalWayR2,GalWayR3,MajBert,JiuLoNiuLo}. Regarding Theorem~\ref{ThIntro}, it is interesting to note that the assumption $u_0 \in X$ is automatically satisfied when the vorticity $\omega_0 = \rot(u_0)$ is locally square integrable and decays sufficiently rapidly at infinity. To make a precise statement, following \cite{GalWayR2} we introduce for any real number $m \ge 0$ the weighted Lebesgue space $L^2_m(\Omega) = \{w \in L^2(\Omega)\mid \|w\|_{L^2_m} < \infty\}$ where 
\[
\|w\|_{L^2_m}^2 = \int_\Omega (1+r^2)^m|w(r,\theta,z)|^2\,rdrd\theta dz.
\]
Using H\"older's inequality, it is straightforward that $L^2_m(\Omega)\hookrightarrow L^1(\Omega)$ when $m > 1$. The corresponding function space for helical vector fields is
\[
Y_m = \left\{w\in \left(L_m^2(\Omega)\right)^3\ \middle|\ \di(w)=0,\ w\text{ is helical} \right\}.
\]
If $\omega \in Y_m$, we denote by $u = \BS[\omega]$ the unique helical vector field $u \in (L^6(\Omega))^3$ satisfying $\di(u) = 0$ and $\rot(u) = \omega$. The correspondence $\omega \mapsto \BS[\omega]$ is the helical Biot-Savart law in the domain $\Omega$, see for example \cite{MajBert}. 

\begin{proposition}\label{YmProp}
Let $\omega \in Y_m$ for some $m > 1$, and let us define $a \in \RR$ by 
\begin{equation}\label{adef}
a = \frac{1}{2\pi L}\int_{\Omega}\omega_z(r,\theta,z)\,rdrd\theta dz.
\end{equation}
Then the velocity field $u$ associated with $\omega$ can be decomposed as $u = a\ulo(0) + v$ where $v \in H$ satifies $\|v\|_{H^1} \leqslant C\|\omega\|_{L^2_m}$ for some constant $C>0$. In particular, $u$ belongs to $X$. 
\end{proposition}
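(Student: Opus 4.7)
The natural candidate is $v := u - a\ulo(0)$, which is helical and divergence-free, with vorticity $\tilde\omega := \omega - a\wlo(0)$. Since $\wlo(0)$ is Gaussian and hence lies in $L^2_m(\Omega)$ for every $m$, and since the embedding $L^2_m(\Omega)\hookrightarrow L^1(\Omega)$ (valid for $m>1$) yields $|a| \leqslant C\|\omega\|_{L^2_m}$, the triangle inequality gives $\|\tilde\omega\|_{L^2_m} \leqslant C\|\omega\|_{L^2_m}$. Moreover, the choice of $a$ is precisely tailored so that the vertical circulation of $\tilde\omega$ vanishes: $\int_\Omega \tilde\omega_z\,r\,drd\theta dz = 0$.

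For the gradient norm, the identity $\|\nabla v\|_{L^2}^2 = \|\rot v\|_{L^2}^2 + \|\di v\|_{L^2}^2$ (proved by integration by parts over $\Omega$ using periodicity in $z$ and decay in $r$, and justified on $v$ by a density argument since a priori $v \in L^6$) yields directly $\|\nabla v\|_{L^2} = \|\tilde\omega\|_{L^2} \leqslant C\|\omega\|_{L^2_m}$. For the $L^2$ norm, I would decompose $v = \vm + \vv$ with $\vm = Qv$ and $\vv = (1-Q)v$. The zero-vertical-mean component $\vv$ is handled by Poincar\'e's inequality (Lemma~\ref{ThPoincare}), giving $\|\vv\|_{L^2} \leqslant L\|\nabla v\|_{L^2}$. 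The mean part $\vm$ is radial and two-and-a-half dimensional; divergence-freeness forces $\vm_r = 0$, and the two remaining components satisfy the radial two-dimensional Biot-Savart formulae
\begin{equation*}
\vm_\theta(r) = \frac{1}{r}\int_0^r s\bigl(\wm_z(s) - a\wlo_z(0,s)\bigr)\,ds, \qquad \vm_z(r) = \int_r^\infty \wm_\theta(s)\,ds,
\end{equation*}
where $\wm = Q\omega$. The zero-circulation identity rewrites the first integral as $-\frac{1}{r}\int_r^\infty s\bigl(\wm_z - a\wlo_z(0)\bigr)\,ds$. A weighted Cauchy-Schwarz against $s(1+s^2)^{-m}\,ds$ applied to each representation (the $\int_0^r$ form near the origin, the $\int_r^\infty$ form near infinity) then controls the pointwise size of $\vm_\theta$ and $\vm_z$ and yields $\|\vm\|_{L^2(\RR^2)} \leqslant C\|\tilde\omega\|_{L^2_m(\RR^2)} \leqslant C\|\omega\|_{L^2_m}$.

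The main obstacle is making the weighted Hardy-type estimates sharp at the critical threshold $m>1$, where the decay at infinity is borderline (the relevant radial integral $\int_r^\infty s(1+s^2)^{-m}\,ds$ converges exactly when $m>1$, and produces a factor $r^{2-2m}$ in $|\vm|^2$ whose further integration against $r\,dr$ converges only in that same range). The subtraction $-a\ulo(0)$ is indispensable: without it, $\vm_\theta(r) = \frac{1}{r}\int_0^r s\wm_z(s)\,ds$ would generically behave like $\frac{\text{const}}{r}$ at infinity and fail to lie in $L^2(\RR^2)$. The Oseen vortex captures precisely the part of $u$ carrying the vertical circulation, leaving an $H^1$-remainder in~$H$.
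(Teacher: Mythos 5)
Your proof is correct and follows essentially the same route as the paper: the decomposition $v=\vm+\vv$ via the vertical average $Q$, the explicit radial Biot--Savart formulas with $\vm_r=0$, the subtraction of $a\ulo(0)$ so that the zero-circulation identity yields the two-sided representation of $\vm_\theta$, and the weighted H\"older/Cauchy--Schwarz estimates that close precisely in the range $m>1$. The only deviation is in the routine parts and is harmless: where you bound $\|\nabla v\|_{L^2}$ by the divergence-free identity $\|\nabla v\|_{L^2}=\|\rot v\|_{L^2}=\|\tilde\omega\|_{L^2}$ and $\|\vv\|_{L^2}$ by Poincar\'e's inequality (Lemma~\ref{ThPoincare}), the paper instead invokes the Fourier-side bound $\|\uv\|_{H^1}\leqslant C\|\wv\|_{L^2}$ from \cite[proposition A.2]{GallayRoussierMichon} together with $\|\nabla\um\|_{L^2}\leqslant C\|\wm\|_{L^2}$, which amounts to the same Plancherel computation.
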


\begin{proof}
As in \eqref{eqdefumuv}, we denote $\um = Qu$, $\uv = (1-Q)u$, and similarly $\wm = Q\omega$, $\wv = (1-Q)\omega$. For the components with zero vertical mean, the Biot-Savart law $\uv = \BS[\wv]$ has a convenient expression in Fourier variables from which one easily deduces that $\|\uv\|_{H^1} \leqslant C_1 \|\wv\|_{L^2}$ for some constant $C_1>0$, see \cite[proposition A.2]{GallayRoussierMichon}. 
On the other hand, by helical symmetry and incompressibility, the vertically averaged vector fields $\um$ and $\wm$ take the form
\begin{equation}\label{umom}
\um = \um_\theta(r)e_\theta + \um_z(r)e_z
\qquad\text{ and }\qquad
\wm = \wm_\theta(r)e_\theta + \wm_z(r)e_z\,,
\end{equation}
and the relation $\wm = \rot(\um)$ reduces to $\wm_\theta = -\partial_r \um_z$ and $\wm_z = \frac{1}{r}\partial_r(r\um_\theta)$. This gives the explicit formulas
\begin{equation}\label{BSbar}
\um_\theta(r) = \frac{1}{r}\int_0^r \wm_z(\rho) \,\rho d\rho
\qquad\text{ and }\qquad
\um_z(r) = \int_r^{+\infty} \wm_\theta(\rho) \,d\rho
\end{equation}
for every $r>0$. We know from Fourier analysis that $\|\nabla \um\|_{L^2} \leqslant C_2\|\wm\|_{L^2}$ for some constant $C_2>0$, but the difficulty is to estimate the $L^2$ norm of $\um$. 

We first consider the vertical velocity $\um_z$. Using H\"older's inequality, one can bound
\begin{equation}\label{YmPropumz}
|\um_z(r)| 
\leqslant \int_r^\infty |\wm_\theta(\rho)| \,d\rho 
\leqslant \|\wm_\theta\|_{L^2_m} \left(\int_r^{+\infty} \frac{1}{\rho(1+\rho^2)^m}\,d\rho\right)^{\frac{1}{2}}
\leqslant C_3\, \|\wm_\theta\|_{L^2_m} \frac{1 + \left(\ln_+(\frac{1}{r})\right)^{\frac{1}{2}}}{(1+r)^{m}}
\end{equation}
where $\ln_+(s) = \max\bigl(\ln(s),0\bigr)$ and $C_3>0$ is a constant depending on $m$. In particular, since $m > 1$, we see that $\|\um_z\|_{L^2} \leqslant C_3' \|\wm_\theta\|_{L^2_m}$ for some $C_3'>0$ depending on $m$.

Let us next estimate the azimuthal velocity $\um_\theta$, which requires a more careful argument. Indeed, even if $\wm_z$ decays rapidly at infinity, the first expression in \eqref{BSbar} shows that $\um_\theta \sim \frac{a}{2\pi r}$ as $r \to +\infty$, so that $\um_\theta \notin L^2(\Omega)$ if the constant $a$ defined in \eqref{adef} is nonzero. This is precisely the reason for which one has to extract from $\um_\theta$ a multiple of the Oseen vortex. We thus define
\begin{equation}\label{wmdecomp}
\vm_\theta(r) = \um_\theta(r) - \frac{a}{2\pi r}\left(1 - e^{-r^2/4}\right)
\qquad\text{ and }\qquad
\wwm_z(r) = \wm_z(r) - \frac{a}{4\pi}e^{-r^2/4}\,,
\end{equation}
so that $\vm_\theta e_\theta = \BS[\wwm_z e_z]$ is the velocity field associated with $\wwm_z e_z$ via the Bio-Savart law. 
The definition of $a$ in \eqref{adef} implies that $\int_0^\infty \wwm_z(r)\,rdr = 0$, so that $\forall r>0$
\begin{equation}\label{BSbar2}
\vm_\theta(r) = \frac{1}{r}\int_0^r \wwm_z(\rho) \,\rho d\rho = -\frac{1}{r}\int_r^{+\infty} \wwm_z(\rho) \,\rho d\rho.
\end{equation}
Using H\"older's inequality again we deduce that
\begin{equation}\label{YmPropvmtheta}
r |\vm_\theta(r)| 
\leqslant \|\wwm_z\|_{L^2_m} \min\left( \int_0^r \frac{\rho}{(1+\rho^2)^m}\,d\rho \,,\, \int_r^{+\infty} \frac{\rho}{(1+\rho^2)^m}\,d\rho\right)^{1/2} 
\leqslant \frac{C_4\,r\,\|\wwm_z\|_{L^2_m}}{(1+r)^m},
\end{equation}
hence $\|\vm_\theta\|_{L^2(\Omega)} \leqslant C_4' \|\wwm_z\|_{L^2_m} \leqslant C_4''\|\wm_z\|_{L^2_m}$ where $C_4,C_4',C_4''>0$ are some constants depending on $m$.

Summarizing, in view of \eqref{umom} and \eqref{wmdecomp} we have shown that $u = \BS[\omega] = a\ulo(0) + v$ where
\[
v(r,\theta,z) = \vm_\theta(r)e_\theta + \um_z(r)e_z + \uv(r,\theta,z)
\]
satisfies $\|v\|_{H^1} \leqslant C\|\omega\|_{L^2_m}$.
This concludes the proof. 
\end{proof}

It follows from Proposition~\ref{YmProp} that Theorem~\ref{ThIntro} applies when the initial velocity $u_0$ is helical, incompressible and satisfies $\omega_0 = \rot(u_0) \in Y_m$ for some $m > 1$.
In the spirit of the previous works \cite{GalWayR2,GalWayR3,GallayRoussierMichon}, it is legitimate to ask if the convergence result \eqref{eqThIntro} can be improved in that situation.
The answer is positive, but the question is of moderate interest because, as is shown in Section~\ref{SectionAsymptotics}, the long-time asymptotics of the helical Navier-Stokes equations are described by radially symmetric solutions of the heat equation \eqref{eqvm} on $\RR^2$, which are rather easy to analyze. 

A typical result in this direction is the following.

\begin{corollary}\label{YmCor}
Assume that $\omega_0 \in Y_m$ for some $m \in \,]1,2[$ and let $u$ be the solution of the Navier-Stokes equations \eqref{eqNSdimensionless} with initial velocity $u_0 = \BS[\omega_0]$. Then
\begin{equation}\label{uconv2}
\|u(t) - a\ulo(t)\|_{L^2} + \sqrt{t}\,\|\nabla u(t) - a\nabla\ulo(t)\|_{L^2} \,=\, \mathcal{O}\bigl(t^{\frac{1-m}{2}}\bigr) 
\quad \text{ as }\, t \to +\infty\
\end{equation}
where $a$ is defined by \eqref{adef}.
\end{corollary}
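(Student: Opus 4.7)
By Proposition~\ref{lemmaEstimatesuv}, the zero-vertical-mean component $\uv$ and its gradient decay exponentially in $L^2(\Omega)$, so the corollary reduces, through the decomposition $u - a\ulo = \vm + \uv$, to proving
\[
\|\vm(t)\|_{L^2} = \mathcal{O}\bigl(t^{(1-m)/2}\bigr) \qquad\text{and}\qquad \|\nabla\vm(t)\|_{L^2} = \mathcal{O}\bigl(t^{-m/2}\bigr).
\]
Both $\vm$ and $\nm$ are $z$-independent and I will view them as functions on $\RR^2$. The natural tool is the Duhamel formula from Section~\ref{SectionAsymptotics},
\[
\vm(t) = S(t)\vm_0 - \int_0^t S(t-s)\nm(s)\,ds,
\]
and the two terms are estimated separately.

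The homogeneous term is the heart of the argument. Revisiting the proof of Proposition~\ref{YmProp}, the pointwise bounds \eqref{YmPropumz} and \eqref{YmPropvmtheta} combine into
\[
|\vm_0(r)| \;\leqslant\; \frac{C\,\|\omega_0\|_{L^2_m}\bigl(1+\ln_+^{1/2}(1/r)\bigr)}{(1+r)^m},
\]
which is square-integrable and decays like $(1+r)^{-m}$ at infinity. Writing $K_t(x) = (4\pi t)^{-1}\exp(-|x|^2/4t)$ for the 2D heat kernel, I analyse $(S(t)\vm_0)(x) = \int_{\RR^2} K_t(x-y)\vm_0(y)\,dy$ by splitting in two regimes. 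For $|x|\leqslant 2\sqrt t$, I further split the $y$-integral into $|y|\leqslant 4\sqrt t$ (bounding $K_t \leqslant (4\pi t)^{-1}$ and using the elementary computation $\int_0^{4\sqrt t}\rho(1+\rho)^{-m}\,d\rho \lesssim t^{(2-m)/2}$, valid since $m<2$) and $|y|>4\sqrt t$ (using Gaussian decay of $K_t$ via the rescaling $y=\sqrt t\, z$); both pieces yield $|(S(t)\vm_0)(x)| \lesssim t^{-m/2}$. For $|x|>2\sqrt t$, a symmetric splitting into $|y|\leqslant |x|/2$ and $|y|>|x|/2$ yields $|(S(t)\vm_0)(x)| \lesssim (1+|x|)^{-m}$. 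Integrating the squared bounds gives
\[
\|S(t)\vm_0\|_{L^2(\RR^2)}^2 \;\lesssim\; t\cdot t^{-m} + \int_{2\sqrt t}^{+\infty}\rho^{1-2m}\,d\rho \;\lesssim\; t^{1-m},
\]
the second integral being convergent since $m>1$. The gradient bound $\|\nabla S(t)\vm_0\|_{L^2}\lesssim t^{-m/2}$ follows via the semigroup identity $\nabla S(t) = \nabla S(t/2)\circ S(t/2)$ and the standard estimate $\|\nabla S(t/2)\|_{L^2\to L^2}\lesssim t^{-1/2}$.

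For the source term, since $\uv$ is divergence-free we write $\uv\cdot\nabla\uv = \di(\uv\otimes\uv)$, and using that $P$ commutes with $S$ and is bounded on $L^2$, and that $Q$ is bounded on every $L^p$ and commutes with horizontal derivatives, the $L^1\to L^2$ heat-smoothing bound $\|\nabla S(\tau) f\|_{L^2(\RR^2)} \lesssim \tau^{-1}\|f\|_{L^1(\RR^2)}$ combined with $\|\uv(s)\otimes\uv(s)\|_{L^1(\Omega)}\leqslant \|\uv(s)\|_{L^2}^2\lesssim e^{-cs}$ (Proposition~\ref{lemmaEstimatesuv}) gives
\[
\|S(t-s)\nm(s)\|_{L^2} \;\lesssim\; (t-s)^{-1}\,e^{-cs}
\qquad \text{for } s\in[0,t/2].
\]
On $s\in[t/2,t]$, the crude bound $\|S(t-s)\nm(s)\|_{L^2}\leqslant\|\nm(s)\|_{L^2}\lesssim e^{-cs}$, obtained via Lemma~\ref{lemmaLadyzhen} and Proposition~\ref{lemmaEstimatesuv}, suffices. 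Integrating produces $\mathcal{O}(t^{-1}) + \mathcal{O}(e^{-ct/2}) = o(t^{(1-m)/2})$ for $m<2$. The analogous argument with $\|\nabla^2 S(\tau) f\|_{L^2}\lesssim \tau^{-3/2}\|f\|_{L^1}$ handles the gradient term and yields $\mathcal{O}(t^{-3/2}) = o(t^{-m/2})$.

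The main obstacle is the borderline integrability of $\vm_0$: its pointwise decay $(1+r)^{-m}$ places it in $L^p(\RR^2)$ only for $p>2/m$ strictly, so the standard $L^p\to L^2$ smoothing estimate of the heat semigroup cannot be invoked with the sharp exponent $p=2/m$. The explicit two-regime kernel splitting above is what restores the rate $t^{(1-m)/2}$.
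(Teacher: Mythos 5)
Your proof is correct, and it follows the paper's overall architecture — split $u - a\ulo = \vm + \uv$, exponential decay of $\uv$ from Proposition~\ref{lemmaEstimatesuv}, Duhamel for $\vm$ — but the two key steps are carried out by a genuinely different route. For the homogeneous term, the paper observes that the pointwise bounds \eqref{YmPropumz} and \eqref{YmPropvmtheta} place $\vm_0$ in the weak Lebesgue space $L^q_w(\RR^2)$ with $q = 2/m$, and obtains $\|S(t)\vm_0\|_{L^2} \leqslant C\|G_t\|_{L^p}\|\vm_0\|_{L^q_w} \leqslant C t^{\frac{1-m}{2}}\|\wm_0\|_{L^2_m}$ in one line from the generalized Young inequality with $1/p + 1/q = 3/2$ — exactly the borderline you identify at the end, which the weak-type inequality absorbs while the strong $L^p\to L^2$ smoothing cannot. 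Your two-regime kernel splitting reproves this special case of weak Young by hand: longer, but elementary and self-contained. Your treatment of the source term also differs usefully: by writing $\nm$ in divergence form and combining $L^1\to L^2$ heat smoothing with $\|\uv(s)\|_{L^2}^2 \lesssim e^{-cs}$, you use only bounds valid down to $s=0$ and can run Duhamel from time zero, whereas \eqref{eqNdecroitExponentiellementL2} is only stated for $t\geqslant 1$. The paper instead proceeds ``as in Proposition~\ref{ThAsymptoticOseen}'', restarting the evolution at a positive time, and its first step is to verify (as in the cited two-dimensional theory) that $\omega(t)$ remains in $Y_m$ for all times so that the averaged velocity retains the required decay at that later time; your argument sidesteps this persistence step entirely, which is a genuine simplification for this particular statement. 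Two small points to tighten: carry the logarithmic factor $\bigl(1+\ln_+^{1/2}(1/r)\bigr)$ through the near-origin integrals (it is locally integrable against $\rho\,d\rho$, so all your bounds survive), and on $[t/2,t]$ the gradient estimate should use the integrable singularity $\|\nabla S(t-s)\nm(s)\|_{L^2}\lesssim (t-s)^{-1/2}e^{-cs}$, as your ``analogous argument'' implicitly does.
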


We only give a sketch of the proof, and leave the details to the reader. The first step is to verify, as in \cite{GalWayR2}, that the vorticity $\omega = \rot(u)$ stays in the space $Y_m$ for all times. 
The strategy is then to estimate the vertically averaged velocity field $\vm$ as in Proposition~\ref{ThAsymptoticOseen}. Since $\omega_0 \in Y_m$, the bounds \eqref{YmPropumz} and \eqref{YmPropvmtheta} established when proving Proposition~\ref{YmProp} show that the radially symmetric velocity field $\vm_0$ is almost bounded near the origin, and decays like $r^{-m}$ as $r \to \infty$; in particular $\vm_0$ belongs to the weak Lebesgue space $L^q_w(\RR^2)$ with exponent $q = 2/m\in\ ]1,2[\,$. Let $p = 2/(3-m)$, so that $1/p + 1/q = 3/2$. Since $S(t)\vm_0 =
G_t*\vm_0$ where $G_t(x) = (4\pi t)^{-1}\exp(-|x|^2/(4t))$ is the heat kernel
in $\RR^2$, we can apply the generalized Young inequality to obtain the bound
\[
\|S(t)\vm_0\|_{L^2} = \|G_t * \vm_0\|_{L^2} 
\leqslant C \|G_t\|_{L^p} \|\vm_0\|_{L^q_w}
\leqslant C t^{\frac{1-m}{2}} \|\wm_0\|_{L^2_m}
\]
for some constant $C>0$ depending on $m$ only. 
This shows that the first term in the right-hand side of \eqref{eqvmIntegral} decays like $t^{\frac{1-m}{2}}$ in energy norm as $t \to +\infty$, and a similar argument gives the same conclusion for the integral term since estimate \eqref{eqNdecroitExponentiellementL2} tells us that the source term $\nm(t)$ decays exponentially. Thus $\|v(t)\|_{L^2} = \|u(t) - a\ulo(t)\|_{L^2} = \mathcal{O}\bigl(t^{\frac{1-m}{2}}\bigr)$, and proceeding as in Proposition~\ref{ThAsymptoticOseen} we obtain the corresponding result for the first order derivatives too. 

The limitation $m < 2$ in Corollary~\ref{YmCor} is essential, in the sense that the conclusion may fail as soon as $m > 2$. This can be seen by considering helical shear flows of the form 
\begin{equation}\label{explicit1}
u(t,r,\theta,z) = f(t,r) e_z\,, 
\qquad \omega(t,r,\theta,z) = g(t,r) e_\theta\,, 
\end{equation}
where $g = -\partial_r f$ and $f$ solves the linear heat equation $\partial_t f = \partial_r^2 f + \frac{1}{r}\partial_r f$. It is easy to verify that $u$ is an exact solution of the Navier-Stokes equation \eqref{eqNSdimensionless}, where the nonlinearity $(u\cdot\nabla)u + \nabla p$ vanishes identically.
Moreover we have $a = 0$ in \eqref{adef} because $\omega_z = 0$. Therefore, if we take for instance
\[
f(t,r) = \frac{e^{-\frac{r^2}{4(1+t)}}}{4\pi(1+t)}
\qquad\text{ and }\qquad
g(t,r) = \frac{r\,e^{-\frac{r^2}{4(1+t)}}}{8\pi (1+t)^2}\,,
\]
we see that $\omega \in Y_m$ for any $m > 1$ but $t^{1/2}\|u(t)\|_{L^2}$ and $t\|\omega(t)\|_{L^2}$ have nonzero limits as $t \to +\infty$. This proves that \eqref{uconv2} does not hold in general when $m > 2$, even in the finite-energy situation where $a = 0$. 

Another explicit example of helical solutions of \eqref{eqNSdimensionless} is given by two-dimensional vortices of the form 
\begin{equation}\label{explicit2}
u(t,r,\theta,z) = g(t,r) e_\theta\,, 
\qquad \omega(t,r,\theta,z) = h(t,r) e_z\,, 
\end{equation}
where $h = \frac{1}{r}\partial_r(rg)$ solves the linear heat equation $\partial_t h = \partial_r^2 h + \frac{1}{r}\partial_r h$, and $a = 2\pi \int_0^\infty h(0,r)\,rdr$. These solutions can also be used to illustrate the limitations of Corollary~\ref{YmCor}. In a broader perspective, since the source term $\nm$ in the projected equation \eqref{eqvm} decays exponentially as $t \to +\infty$, it can be shown that the long-time asymptotics of the helical Navier-Stokes equations are described by explicit solutions of the form \eqref{explicit1} and \eqref{explicit2}. Note that the situation here is substantially simpler than in the general case considered for instance in \cite{GalWayR2,GalWayR3}, because helical symmetry forces the solutions to become not only independent of $z$ in the long-time limit, but also radially symmetric, which greatly reduces the complexity of the asymptotic dynamics.

\section*{Acknowledgements}

Huge and deep thanks to Thierry Gallay for all the help and the support that led to this article.

\bibliographystyle{plain}
\bibliography{C:/Users/quent/Documents/Softwares/Latex/_mon_test_de_LaTex/BibQV}

\end{document}